\newcommand{\sumthree}{\operatorname*{\sum\sum\sum}}
\numberwithin{equation}{section}
\newtheorem{thm}{Theorem}[section]
\newtheorem{lem}[thm]{Lemma}
\newtheorem{cor}[thm]{Corollary}
\newtheorem {rem}[thm]{Remark}
\title{Mixed integral moments of the Hecke $L$-functions and Riemann zeta function}
\author{Zhaoyan Chen} 
\affiliation{organization={School of Mathematics},
	addressline={Shandong University}, 
	city={Jinan Shandong},
	postcode={250100}, 
	country={China}}
\begin{document}

\begin{keyword}
Hecke {$L$}-functions, Riemann zeta function, cusp form.
\end{keyword}

\begin{abstract}
In this paper, let $f$ be a Hecke cusp form for $SL(2,\mathbb{Z})$. We establish an asymptotic formula for the mixed moment of $\zeta^{2}(s)$ and $L(s,f)$ on the critical line, valid for both holomorphic and Maass forms.
\end{abstract}

\maketitle

\ead{zychen@mail.sdu.edu.cn}

	\section{Introduction}
	It is an important problem to understand the asymptotic formula of integral means of automorphic $L$-functions on the critical line. Classically, this line of research begins with the study of the moments of the Riemann zeta function, defined by
	$$
	\begin{aligned}
		I_{k}(T)=\int_{0}^{T}|\zeta(\frac{1}{2}+it)|^{2k} \mathrm{~d} t.
	\end{aligned}
	$$
    Keating and Snaith \cite{keating2000random} conjectured that the general form for the $k$-th moment is given by:
	$$
	\begin{aligned}
		I_{k}(T)=T \mathcal{P}_k(\log T)+O\left(T^{1 / 2+\varepsilon}\right),
	\end{aligned}
	$$
	where $\mathcal{P}_k$ is a polynomial of degree $k^2$. The above conjecture has been proved for $k=1$ \cite{ingham1928mean}. The current record for the error term in $I_1(T)$ was due to Bourgain and Watt \cite{bourgain2018decoupling} who showed that $O(T^{\frac{1515}{4816}+\varepsilon})$. 
	
	For $k=2$, The earliest result was obtained by Ingham \cite{ingham1928mean}, who established an estimate with an error term of size $O(T\log^{3} T)$. In 1979, Heath-Brown \cite{heath1979fourth} improved the error term to $O(T^{7/8+\varepsilon})$. Zavorotny\u{\i} \cite{MR1683661} subsequently improved the error term in $I_2(T)$ to $O(T^{2/3+\varepsilon})$. The best known result to date, as presented in \cite{ivic1995fourth2}, is the following asymptotic formula:
	\begin{equation}\label{Ivic}
		\begin{aligned}
			I_{2}(T)=T\mathcal{P}_2(\log T)+O(T^{2/ 3}(\log T)^8).
		\end{aligned}
	\end{equation}
	The error term $O(T^{1/2+\varepsilon})$ has been obtained in the case of a smooth weight function \cite{ivic1995fourth}. Despite many attempts, the asymptotic formulas for higher moments have not been completely solved. 
	
	Owing to the favorable analytic properties of holomorphic cusp forms, the integral means of Hecke $L$-functions have been investigated in a number of works. For instance, Potter \cite{potter1940mean} obtained an asymptotic formula for the second moment when $\operatorname{Re}(s)>1/2$. In 1982, Good \cite{good1982square} extended this result to the critical line using the spectral theory of automorphic forms on $GL(2)$. The following asymptotic holds:
	\begin{equation}\label{good}
		\begin{aligned}
			\int_0^T\left|L\left(\frac{1}{2}+i t,f\right)\right|^2 d t=2 c_{-1} T\left\{\log \frac{T}{2 \pi e}+c_0\right\}+O\left((T \log T)^{2 / 3}\right).
		\end{aligned}
	\end{equation}
	Meurman \cite{meurman1987order} subsequently generalized this result to the Maass case. Similarly, no effective asymptotic formula has been obtained for the case of higher moments. 
	
	Das and Khan \cite{das2015simultaneous} investigated mixed moments in the $q$-aspect. More recently, Blomer \textit{et al.} \cite{blomer2017moments} proved, for prime moduli $p$, the impressive asymptotic
    $$
    \begin{aligned}
    \begin{aligned}
    \frac{1}{p-2}\sum_{\substack{\chi(\bmod p) \\
    			\chi \text { primitive }}} L\left(\frac{1}{2}, f\otimes \chi\right) \overline{L\left(\frac{1}{2},\chi\right)}^2 
    	=\frac{L(1,f)^2}{\zeta(2)}+O_{f, \varepsilon}\left(p^{-1/ 68+\varepsilon}\right).
    \end{aligned}	
    \end{aligned}
    $$
    The error term was subsequently improved to $O(p^{-1/64+\varepsilon})$ by Shparlinski \cite{shparlinski2019sums}, and further to $O(p^{-1/22+\varepsilon})$ for holomorphic $f$ and $O(p^{-5/152+\varepsilon})$ for Maass forms by Khan and Zhang \cite{khan2023error}. Tang and Wu \cite{tang2025mixed} recently extended the result to general moduli, obtaining an error term comparable to the best bound \cite{khan2023error} in the prime modulus case.
    
    In a recent work, the author \cite{Chen2026Integral} studied a mixed second moment involving the Riemann zeta function and Hecke $L$-functions in the $t$-aspect. More precisely, she derived
    $$
    \begin{aligned}
    	\int_T^{2 T} L\left(\frac{1}{2}+i t, f\right)\left|\zeta\left(\frac{1}{2}-i t\right)\right|^2 \mathrm{~d} t=L(1, f) T \log T+c_f T+O\left(T^{2 / 3}(\log T)^{13 / 2}\right).
    \end{aligned}
    $$
    Building on the author’s previous work, the present paper further establishes an asymptotic formula for the corresponding mixed moment without the absolute value. We will prove the following results.
	
	\begin{thm}\label{t1} 
		Let $f$ be a Hecke cusp form (either holomorphic or Maass) for $SL(2,\mathbb{Z})$. Suppose $V$ is a smooth function supported on $[1,2]$ satisfying
		\[
		V^{(i)}(x) \ll \Delta^i,
		\]
		for $i\geq 0$ with $\Delta\ll T^{\varepsilon}$. For any $\varepsilon > 0$, we have that
		\begin{equation}\label{eqt1}
			\int_{\mathbb{R}} V\left(\frac{t}{T}\right) L\left(\frac{1}{2}+it, f\right) {\zeta\left(\frac{1}{2}-it\right)}^2 \,\mathrm{d}t
			= 2 cT \frac{L(1, f)^2}{\zeta(2)}+O\left(T^{1/2+\varepsilon}\right),
		\end{equation}
		where $c = \int_{\mathbb{R}} V(\xi)\mathrm{~d} \xi$.
	\end{thm}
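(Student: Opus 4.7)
My plan is to apply approximate functional equations to both $L(1/2+it,f)$ and $\zeta(1/2-it)^{2}$, writing
\begin{align*}
L(\tfrac12+it,f) &= \sum_{m}\frac{\lambda_f(m)}{m^{1/2+it}}\,W_1\!\left(\frac{m}{T}\right)
+ \gamma_f\!\left(\tfrac12+it\right)\sum_{m}\frac{\lambda_f(m)}{m^{1/2-it}}\,W_1^{*}\!\left(\frac{m}{T}\right),\\
\zeta(\tfrac12-it)^{2} &= \sum_{n}\frac{d(n)}{n^{1/2-it}}\,W_2\!\left(\frac{n}{T}\right)
+ \chi(\tfrac12-it)^{2}\sum_{n}\frac{d(n)}{n^{1/2+it}}\,W_2^{*}\!\left(\frac{n}{T}\right),
\end{align*}
with smooth cutoffs $W_j,W_j^{*}$ and standard gamma-ratio factors $\gamma_f,\chi$. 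Multiplying, the moment decomposes into four pieces $I_{pp},I_{pd},I_{dp},I_{dd}$ according to whether the principal or the dual term is taken from each AFE.

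\textbf{Step 2 ($t$-integration).} In $I_{pp}$ the $t$-oscillation is $(n/m)^{it}$, so
\[
\int_{\mathbb R}V(t/T)\,(n/m)^{it}\,dt \;=\; T\,\widetilde V\!\left(\frac{T}{2\pi}\log\frac{n}{m}\right),
\]
which is negligible outside the range $|n-m|\ll mT^{\varepsilon-1}$. For $I_{dd}$, a Stirling expansion of the gamma factors gives $\gamma_f(\tfrac12+it)\,\chi(\tfrac12-it)^{2}=1+O(1/t)$ uniformly in $f$ (both holomorphic and Maass, the $(-1)^{\nu}$ parity sign in the Maass case cancelling between $\epsilon_f$ and the gamma-ratio phase), so the same stationary condition $m\approx n$ governs its contribution with the same leading amplitude. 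For $I_{pd}$ and $I_{dp}$, the combined $t$-phase forces the would-be stationary point to $m/n\asymp t^{-2}$, outside the AFE range, and iterated integration by parts yields $O(T^{-A})$.

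\textbf{Step 3 (Diagonal main term).} Collecting the $m=n$ contributions from $I_{pp}$ and $I_{dd}$, after Step 2 they sum to
\[
2cT\sum_{n}\frac{\lambda_f(n)\,d(n)}{n}\,W_1(n/T)\,W_2(n/T) + O(T^{1/2+\varepsilon}).
\]
Using the Hecke-relation identity $\sum_{n}\lambda_f(n)d(n)\,n^{-s}=L(s,f)^{2}/\zeta(2s)$ and a short contour shift past $s=1$ to remove the smooth cutoffs, this evaluates to $2cT\,L(1,f)^{2}/\zeta(2)+O(T^{1/2+\varepsilon})$, matching the claimed leading term.

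\textbf{Step 4 (Off-diagonal, the main obstacle).} The remaining pairs are $(m,m+h)$ with $1\le|h|\ll mT^{\varepsilon-1}\ll T^{\varepsilon}$. For each such $h$ I plan to apply Voronoi summation to the divisor-function factor, expressing the $m$-sum in terms of Kloosterman sums weighted by Bessel transforms of the smooth cutoff, and then to invoke Weil's bound (equivalently, Motohashi--Jutila-type shifted-convolution estimates) to obtain
\[
\sum_{m\le M}\lambda_f(m)\,d(m+h) \;\ll\; M^{1/2+\varepsilon}
\]
uniformly in $|h|\ll T^{\varepsilon}$. Partial summation against the smooth weight, together with the factor $T$ from Step 2 and the $O(T^{\varepsilon})$-length sum over $h$, then produces the overall off-diagonal bound $O(T^{1/2+\varepsilon})$. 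The principal obstacle is exactly this uniform shifted-convolution estimate: establishing it with uniformity in $h$ and in the smooth test function, and carrying out the analysis in parallel for holomorphic $f$ (Voronoi with a $J$-Bessel kernel) and for Maass $f$ (Voronoi with $K$- and $J$-Bessel contributions, and the parity splitting), is where the bulk of the technical work lies.
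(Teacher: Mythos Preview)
There are two genuine gaps.

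\textbf{Step 2 is wrong about $I_{pd}$ and $I_{dp}$.} In $I_{pd}$ the $t$-phase is $-t\log(mn)+2t\log(t/2\pi e)$ (principal factor $m^{-it}$ from $L$, dual factor $\chi(\tfrac12-it)^{2}n^{-it}$ from $\zeta^{2}$), whose stationary point sits at $mn=(t/2\pi)^{2}\asymp T^{2}$, squarely inside the AFE range $m,n\ll T^{1+\varepsilon}$; the same holds for $I_{dp}$. These cross terms are therefore \emph{not} $O(T^{-A})$. In the paper they are the pieces $I_{2},I_{3}$: stationary phase in $t$ produces an oscillation $e(-2\sqrt{mn})$ on the locus $MN\asymp T^{2}$, and only after a Poisson--Voronoi--Kloosterman analysis (essentially the same machinery as for the off-diagonal of $I_{pp}$) do they come down to $O(T^{1/2+\varepsilon})$. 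Your iterated integration by parts does not apply here.

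\textbf{Step 4 names a tool that is too weak.} Opening $d(m+h)$, detecting the congruence, and applying Voronoi to the $\lambda_{f}$-sum yields Kloosterman sums $S(h,n;c)$ with $c\ll\sqrt{M}$ and dual length $n\ll T^{\varepsilon}$; bounding each by Weil gives only $\sum_{m\le M}\lambda_{f}(m)\,d(m+h)\ll M^{3/4+\varepsilon}$, hence an off-diagonal contribution of size $T^{3/4+\varepsilon}$, not $T^{1/2+\varepsilon}$. Weil's bound is not ``equivalent'' to Motohashi--Jutila-type estimates: the extra $T^{1/4}$ saving must come from averaging the Kloosterman sums over the modulus $c$ via Kuznetsov, i.e.\ a spectral large sieve. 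The paper in fact avoids the shifted-convolution formulation altogether: it opens $d(m)=\sum_{ab=m}$, applies Poisson in $a$ \emph{before} integrating in $t$, performs stationary phase in both variables to produce additive twists $e(-an/b)$, applies Voronoi on the cusp-form side and a further Poisson, and only then closes with the Deshouillers--Iwaniec large sieve for bilinear Kloosterman sums. If you prefer to keep your ``integrate $t$ first, then shifted convolution'' route, you must replace Weil by a spectral input of this strength.
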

    \begin{rem}
    The same method applies for $f$ either a holomorphic or a Maass form. In fact, the case where $f$ is holomorphic is simpler, so we will present the proof for the Maass case.
    \end{rem}
	Since the test function $V$ in the Theorem~\ref{t1} admits oscillatory behavior, we are able to derive a nontrivial asymptotic formula for the integral with a sharp cutoff. This leads to the following result:
	\begin{cor}\label{c3} We have
	$$
	\begin{aligned}
		\int_{T}^{2T}L\left(\frac{1}{2}+it, f\right){\zeta\left(\frac{1}{2}-it\right)}^{2}\mathrm{~d} t=2 T \frac{L(1, f)^2}{\zeta(2)}+O(T^{1-\varepsilon}).
	\end{aligned}
	$$
    \end{cor}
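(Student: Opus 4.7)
The plan is to derive Corollary~\ref{c3} from Theorem~\ref{t1} by smoothly approximating the indicator of the interval $[1,2]$ and controlling the boundary error with classical short-interval mean-value estimates. Write $F(t) := L(\tfrac12+it,f)\zeta(\tfrac12-it)^2$ for brevity. Fix a small $\varepsilon>0$, set $\eta = T^{-\varepsilon}$ and $\Delta = \eta^{-1} = T^{\varepsilon}$, and construct a smooth weight $V_-$ supported in $[1+\eta,\,2-\eta]\subset[1,2]$, identically $1$ on $[1+2\eta,\,2-2\eta]$, with $0\le V_- \le 1$ and $V_-^{(i)}(x)\ll \eta^{-i}=\Delta^i$. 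Such $V_-$ satisfies the hypotheses of Theorem~\ref{t1}, hence
\[
\int_{\mathbb{R}} V_-\!\left(\tfrac{t}{T}\right)\!F(t)\,\mathrm{d}t \;=\; 2c_- T\,\frac{L(1,f)^2}{\zeta(2)} \;+\; O(T^{1/2+\varepsilon}),
\]
where $c_- = \int V_-(\xi)\,\mathrm{d}\xi = 1 + O(\eta) = 1 + O(T^{-\varepsilon})$. This already contributes the desired main term $2T\,L(1,f)^2/\zeta(2)$ with an admissible error $O(T^{1-\varepsilon})$.

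It then remains to bound the boundary discrepancy
\[
\int_T^{2T}F(t)\,\mathrm{d}t \;-\; \int_{\mathbb{R}} V_-\!\left(\tfrac{t}{T}\right)\!F(t)\,\mathrm{d}t,
\]
whose integrand is supported on the two intervals $I_1 := [T,\,T+2T\eta]$ and $I_2 := [2T-2T\eta,\,2T]$, each of length $H := 2T\eta = 2T^{1-\varepsilon}$. By Cauchy--Schwarz,
\[
\int_{I_j} |F(t)|\,\mathrm{d}t \;\le\; \left(\int_{I_j}\!|L(\tfrac12+it,f)|^2\,\mathrm{d}t\right)^{\!1/2} \left(\int_{I_j}\!|\zeta(\tfrac12+it)|^4\,\mathrm{d}t\right)^{\!1/2}.
\]
For $\varepsilon<1/3$ we have $H \gg T^{2/3}$, so subtracting the asymptotic formula (\ref{Ivic}) across the endpoints of $I_j$ yields $\int_{I_j}|\zeta|^4\,\mathrm{d}t \ll H(\log T)^4$, and the analogue of (\ref{good}) (Good in the holomorphic case, Meurman in the Maass case) gives $\int_{I_j}|L|^2\,\mathrm{d}t \ll H\log T$. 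Combined, each boundary piece is $\ll H(\log T)^{5/2} = T^{1-\varepsilon}(\log T)^{5/2}$, which is $O(T^{1-\varepsilon'})$ for any $0<\varepsilon'<\varepsilon$.

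I do not expect any genuinely hard step: the deep analysis is already packaged in Theorem~\ref{t1}, and the remaining work is an elementary smoothing argument combined with classical fourth-moment and second-moment bounds on short intervals. The only structural constraint worth noting is the hypothesis $\Delta\ll T^{\varepsilon}$ in Theorem~\ref{t1}, which forces the transition region of $V_-$ to have width at least $\approx T^{1-\varepsilon}$ and therefore dictates the final error of size $T^{1-\varepsilon}$ in Corollary~\ref{c3}; one cannot do better than this via pure smoothing, and improving it would require an extension of Theorem~\ref{t1} allowing $\Delta$ as large as a small power of $T$.
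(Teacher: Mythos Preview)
Your proposal is correct and follows essentially the same approach as the paper: approximate the sharp cutoff by a smooth $V$ with transition width $\asymp T^{1-\varepsilon}$ (so $\Delta=T^{\varepsilon}$), apply Theorem~\ref{t1}, and bound the boundary contribution via Cauchy--Schwarz together with the short-interval consequences of \eqref{Ivic} and \eqref{good} (the latter in Meurman's Maass-form version), obtaining an error $\ll T^{1-\varepsilon}(\log T)^{5/2}$. The only cosmetic difference is that the paper takes $V$ supported exactly on $[1,2]$ and equal to $1$ on $[1+1/\Delta,2-1/\Delta]$, whereas you use a minorant $V_-$ supported inside $[1,2]$; both choices lead to boundary intervals of length $\asymp T/\Delta$ and the same final bound.
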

	    In this paper, $\varepsilon$ is an arbitrarily small positive constant which is not necessarily
		the same at each occurrence. 
	
	\section{Preliminaries}
	\subsection{Approximate functional equations} 
	
	Let $L(s,f)$ be an $L$-function of degree $d$ in the sense of \cite{iwaniec2021analytic}. More precisely, we have
	$$
	L(s, f)=\sum_{n \geq 1} \frac{\lambda_f(n)}{n^s}=\prod_p \prod_{j=1}^d\left(1-\frac{\alpha_j(p)}{p^s}\right)^{-1}
	$$
	with $\lambda_f(1)=1, \lambda_f(n) \in \mathbb{C}, \alpha_j(p) \in \mathbb{C}$, such that the series and Euler products are absolutely convergent for $\operatorname{Re}(s)>1$. There exist an integer $q(f) \geq 1$ and a gamma factor
	
	$$
	\gamma(s, f)=\pi^{-d s / 2} \prod_{j=1}^d \Gamma\left(\frac{s-\kappa_j}{2}\right)
	$$
	with $\kappa_j \in \mathbb{C}$ and $\operatorname{Re}\left(\kappa_j\right)<1 / 2$ such that the complete $L$-function
	$$
	\Lambda(s, f)=q(f)^{s / 2} \gamma(s, f) L(s, f)
	$$
	admits an analytic continuation to a meromorphic function for $s \in \mathbb{C}$ of order 1 with poles at most at $s=0$ and $s=1$. Moreover we assume that $L(s, f)$ satisfies the functional equation
	$$
	\Lambda(s, f)=\varepsilon(f) \Lambda(1-s, \bar{f}).
	$$
	The following lemma can be derived from \cite[Theorem 5.3]{iwaniec2021analytic}. Below we simplify the above notation by not displaying the dependence on $f$ and we write $q=q(f)$. 
	\begin{lem} With notation as above, we have
		\begin{equation}\label{app}
			\begin{aligned}
				L(1/2+i t,f)=\sum_{n\geq1}\frac{\lambda_{f}(n)}{n^{1/2+it}}W_{1/2+it}\left(\frac{n}{\sqrt{q}}\right)+\varepsilon(f){q}^{1/2-s}\frac{\gamma(1/2-it,f)}{\gamma(1/2+it,f)}\sum_{n\geq1}\frac{\overline{\lambda_{f}(n)}}{n^{1/2-it}}W_{1/2-it}\left(\frac{n}{\sqrt{q}}\right),
			\end{aligned}
		\end{equation}
		where for $x, c>0$, $W_{s}(x)$ is a smooth function defined by 
		$$
		\begin{aligned}
			W_{s}(x)=\frac{1}{2 \pi i}\int_{(c)}x^{-u}G(u)\frac{\gamma(s+u,f)}{\gamma(s,f)}\frac{\mathrm{~d} u}{u}
		\end{aligned}
		$$
		and $G(u)$ be any function which is holomorphic and bounded in the strip $-4<\operatorname{Re}(u)<4$, even, and normalized by $G(0)=1$.
	\end{lem}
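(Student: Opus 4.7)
The plan is to derive the identity from a single contour integral that simultaneously encodes both the Dirichlet series expansion and the functional equation of $\Lambda(s,f)$. Fix $s=1/2+it$ and choose $c>0$ large enough that both $\operatorname{Re}(s+u)>1$ and $\operatorname{Re}(1-s+u)>1$ hold on the line $\operatorname{Re}(u)=c$. I would consider
\begin{equation*}
I(s) \;:=\; \frac{1}{2\pi i}\int_{(c)} \Lambda(s+u,f)\, G(u)\,\frac{du}{u}.
\end{equation*}
Inside the strip $-c<\operatorname{Re}(u)<c$ the only singularity of the integrand is the simple pole of $1/u$ at $u=0$ (we work under the cuspidal assumption that $L(s,f)$ is entire, which holds here; otherwise residues at $u=-s$ and $u=1-s$ would need to be included), so the residue at $u=0$ equals $\Lambda(s,f)$ because $G(0)=1$.

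Next I would shift the contour from $\operatorname{Re}(u)=c$ to $\operatorname{Re}(u)=-c$, using Stirling's formula for $\gamma(s+u,f)$ together with the boundedness of $G$ on $-4<\operatorname{Re}(u)<4$ to kill the horizontal contributions. On the shifted line I substitute $u\mapsto -u$: since $G$ is even and $du/u$ is odd, the substitution merely flips one sign, and the global functional equation $\Lambda(s-u,f)=\varepsilon(f)\,\Lambda(1-s+u,\bar f)$ converts the shifted integral into an integral of the dual shape. Collecting everything yields
\begin{equation*}
\Lambda(s,f)\;=\;\frac{1}{2\pi i}\int_{(c)}\Lambda(s+u,f)\,G(u)\,\frac{du}{u}\;+\;\varepsilon(f)\,\frac{1}{2\pi i}\int_{(c)}\Lambda(1-s+u,\bar f)\,G(u)\,\frac{du}{u}.
\end{equation*}

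Finally I would divide through by $q^{s/2}\gamma(s,f)$ to isolate $L(s,f)$. In the first integral, writing $\Lambda(s+u,f)=q^{(s+u)/2}\gamma(s+u,f)L(s+u,f)$ produces the factor $q^{u/2}\gamma(s+u,f)/\gamma(s,f)$; expanding $L(s+u,f)=\sum_n\lambda_f(n)n^{-s-u}$ (absolutely convergent on the working line) and interchanging sum with integral recovers $\sum_n \lambda_f(n)n^{-s}W_s(n/\sqrt q)$. Applying the same maneuver to the second integral pulls out the prefactor $\varepsilon(f)\,q^{1/2-s}\gamma(1-s,\bar f)/\gamma(s,f)$ and, after expanding $L(1-s+u,\bar f)=\sum_n\overline{\lambda_f(n)}\,n^{-(1-s+u)}$, leaves $\sum_n\overline{\lambda_f(n)}n^{-(1-s)}W_{1-s}(n/\sqrt q)$. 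Setting $s=1/2+it$ so that $1-s=1/2-it$, and using self-duality on $SL(2,\mathbb{Z})$ so that $\gamma(\cdot,\bar f)=\gamma(\cdot,f)$, reproduces the stated identity. There is no conceptual obstacle here; the only technical point is justifying the contour shift and the sum–integral interchange, both of which follow routinely from Stirling together with the uniform boundedness of $G$ in the strip.
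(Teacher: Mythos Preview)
Your argument is correct and is precisely the standard derivation of the approximate functional equation as in \cite[Theorem~5.3]{iwaniec2021analytic}, which is exactly what the paper invokes in lieu of a proof. There is nothing to add: the contour shift with residue at $u=0$, the substitution $u\mapsto -u$ combined with the evenness of $G$ and the functional equation, and the final division by $q^{s/2}\gamma(s,f)$ followed by Dirichlet series expansion match the cited reference step for step.
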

    \begin{rem}
    (1). We have that $W_{s}(n)\ll\left(\frac{n}{t^{d/2}}\right)^{-A}$ for any $A>0$, thus the sums in \eqref{app} are essentially supported on $n<T^{d/2+\varepsilon}$. \\
    (2). Below, we use the notation $\sum_{N\,dyadic}$ to indicate a sum over $N=2^{k}$ for integers $k$. Let $V_{1}$ be a smooth function supported on $[4/5,11/5]$, satisfying the partition of unity condition
    $$
    \begin{aligned}
    \sum_{N\,dyadic} V_{1}\left(\frac{n}{N}\right)=1
    \end{aligned}
    $$
    for all $n\geq 1$.
    \end{rem}
	\begin{lem}\label{2} For $|t|\in[T,2T]$, we have
		\begin{equation}\label{huangapp}
			\begin{aligned}
				L(1/2+i t, f)&=\frac{1}{2 \pi i} \int_{\varepsilon-i T^{\varepsilon}}^{\varepsilon+i T^{\varepsilon}} \sum_{\substack{N \leq T^{d/2+\varepsilon} \\
						N\,d y a d i c}} \sum_{n \geq 1} \frac{\lambda_f(n)}{n^{1 / 2+i t+w}} V_1\left(\frac{n}{N}\right)\left(\frac{ t}{2\pi}\right)^{dw/2} e^{i\operatorname{sgn}(t) \pi dw / 4}q^{w/2} \frac{G(w)}{w} \mathrm{~d} w \\
				& +\frac{\varepsilon(f)}{2 \pi i} \int_{\varepsilon-i T^\varepsilon}^{\varepsilon+i T^\varepsilon} \sum_{\substack{N \leq T^{d/2+\varepsilon} \\
						N\,d y a d i c}} \sum_{n \geq 1} \frac{\overline{\lambda_f(n)}}{n^{1 / 2-i t+w}} V_1\left(\frac{n}{N}\right)\left(\frac{t}{2 \pi e}\right)^{-dit}\left(\frac{t}{2 \pi}\right)^{dw/2} \\
				& \times e^{i\pi\operatorname{sgn}(t)(-  dw / 4+ d/4+\sum_{j}\kappa_{j}/2)}q^{w/2-it} \frac{G(w)}{w} \mathrm{~d} w+O\left(T^{d/4-1+\varepsilon}\right).\\
			\end{aligned}
		\end{equation}
	\end{lem}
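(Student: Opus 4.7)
The proof starts from the approximate functional equation \eqref{app}: the plan is to substitute the Mellin representation of $W_{1/2\pm it}$, apply Stirling's formula to the resulting gamma quotients, truncate the Mellin contour at $|\operatorname{Im}(w)|\le T^{\varepsilon}$, and finally insert a smooth dyadic partition of unity in the $n$-variable with the outer sum cut off at $N\le T^{d/2+\varepsilon}$. Concretely, the first term of \eqref{app} becomes
$$\sum_{n\ge 1}\frac{\lambda_f(n)}{n^{1/2+it}}\cdot\frac{1}{2\pi i}\int_{(\varepsilon)}\left(\frac{n}{\sqrt q}\right)^{-w}\frac{\gamma(1/2+it+w,f)}{\gamma(1/2+it,f)}G(w)\frac{dw}{w},$$
and the identity $(n/\sqrt q)^{-w}=n^{-w}q^{w/2}$ immediately supplies the factor $q^{w/2}$ appearing in \eqref{huangapp}.

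The central analytic ingredient is Stirling's expansion, applied factor by factor to $\gamma(s,f)=\pi^{-ds/2}\prod_j\Gamma((s-\kappa_j)/2)$. For $|t|\in[T,2T]$ and $|\operatorname{Im}(w)|\le T^{\varepsilon}$, each quotient $\Gamma((1/2+it+w-\kappa_j)/2)/\Gamma((1/2+it-\kappa_j)/2)$ is well approximated by $(it/2)^{w/2}$ with multiplicative error $1+O(T^{-1+\varepsilon})$. Collecting the $d$ factors and the $\pi^{-dw/2}$ prefactor, and using the convention $(it)^{w/2}=|t|^{w/2}e^{i\operatorname{sgn}(t)\pi w/4}$, one obtains
$$\frac{\gamma(1/2+it+w,f)}{\gamma(1/2+it,f)}=\left(\frac{t}{2\pi}\right)^{dw/2}e^{i\operatorname{sgn}(t)\pi dw/4}\bigl(1+O(T^{-1+\varepsilon})\bigr).$$
The dual term is handled identically: applying Stirling to $\gamma(1/2-it+w,f)/\gamma(1/2-it,f)$ produces the phase $e^{-i\operatorname{sgn}(t)\pi dw/4}$ inside the $w$-integral (the sign reversal comes from $\operatorname{sgn}(-t)=-\operatorname{sgn}(t)$), while the ``chi-factor'' $\varepsilon(f)\,q^{-it}\,\gamma(1/2-it,f)/\gamma(1/2+it,f)$ yields the remaining $(t/(2\pi e))^{-dit}$, the sign-dependent phase $e^{i\pi\operatorname{sgn}(t)(d/4+\sum_j\kappa_j/2)}$, and $q^{-it}$; here the $e^{dit}$ produced by Stirling is absorbed via $(t/(2\pi e))^{-dit}=(t/(2\pi))^{-dit}e^{dit}$.

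To close the argument, one chooses $G(w)$ with super-polynomial decay on vertical lines (for instance $G(w)=e^{w^{2}}$, renormalized so that $G(0)=1$), so that truncating the contour to $|\operatorname{Im}(w)|\le T^{\varepsilon}$ loses only $O(T^{-A})$; likewise, the rapid decay of $W_s(n)$ noted in the first remark after \eqref{app} makes restricting to $N\le T^{d/2+\varepsilon}$ free of cost. The additive error is then governed by the Stirling relative error $O(T^{-1+\varepsilon})$ multiplied by the convexity-type size $T^{d/4+\varepsilon}$ for the truncated approximate functional equation, yielding exactly $O(T^{d/4-1+\varepsilon})$. The main technical obstacle, though standard, is keeping Stirling's expansion uniform in $w$ on the segment $\varepsilon+i[-T^{\varepsilon},T^{\varepsilon}]$ and in $t$ with $|t|\asymp T$; since $|\operatorname{Im}(w)|\le T^{\varepsilon}$ is negligible next to $|t|\gg T$, the uniform expansion $\log\Gamma(z)=(z-\tfrac12)\log z-z+\tfrac12\log 2\pi+O(|z|^{-1})$ applies directly, and one may alternatively deduce the chi-factor asymptotic from \cite[Ch.~5]{iwaniec2021analytic} and combine it with the diagonal calculation.
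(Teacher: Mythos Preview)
Your proposal is correct and follows essentially the same route as the paper: start from \eqref{app}, write the Mellin integral on $\operatorname{Re}(w)=\varepsilon$, choose $G(w)=e^{w^2}$ to truncate to $|\operatorname{Im}(w)|\le T^{\varepsilon}$, replace each gamma quotient by its Stirling asymptotic (the paper records these as \eqref{stir1} and \eqref{stir2}), truncate the $n$-sum and insert the dyadic partition. Your handling of the error---the $O(T^{-1+\varepsilon})$ Stirling remainder against the convexity-size $T^{d/4+\varepsilon}$ of the truncated AFE---is exactly how the paper arrives at $O(T^{d/4-1+\varepsilon})$.
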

	\begin{proof} The calculation is the same as Huang \cite[Lemma 11]{huang2021rankin}, but the results are slightly different. As $|t| \rightarrow \infty$, Stirling's formula gives
		$$
		\Gamma(\sigma+i t)=\sqrt{2 \pi}|t|^{\sigma-\frac{1}{2}+i t} \exp \left(-\frac{\pi}{2}|t|-i t+i \operatorname{sgn}(t) \frac{\pi}{2}\left(\sigma-\frac{1}{2}\right)\right)\left(1+O\left(|t|^{-1}\right)\right) \text {. }
		$$
		
		\noindent Hence for $|t| \in[T, 2 T]$ with $T$ large, and $\varepsilon \leq \operatorname{Re}(w) \ll 1,|\operatorname{Im}(w)| \ll T^{\varepsilon}$, and $\kappa_j \ll 1$, we have
		\begin{equation}\label{stir1}
			\frac{\Gamma\left(\frac{1 / 2+i t+w-\kappa_j}{2}\right)}{\Gamma\left(\frac{1 / 2+i t-\kappa_j}{2}\right)}=\left(\frac{|t|}{2}\right)^{w / 2} e^{i\operatorname{sgn}(t)\pi w / 4}\left(1+O\left(T^{ \varepsilon-1}\right)\right),
		\end{equation}
		\begin{equation}\label{stir2}
			\frac{\Gamma\left(\frac{1 / 2-i t-\kappa_j}{2}\right)}{\Gamma\left(\frac{1 / 2+i t-\kappa_j}{2}\right)}=\left(\frac{|t|}{2 e}\right)^{-i t}e^{i\pi \operatorname{sgn}(t)(1/4+\kappa_{j}/2
				)}\left(1+O\left(T^{-1}\right)\right) .
		\end{equation}
		
		\noindent For $|t| \in[T, 2 T]$, by the approximate functional equation \eqref{app} we have
		$$
		\begin{aligned}
			L(1 / 2+i t, f)= & \frac{1}{2 \pi i} \sum_{n \geq 1} \frac{\lambda_f(n)}{n^{1 / 2+i t}}  \int_{(2)} \frac{\gamma(1 / 2+i t+w, f)}{\gamma(1 / 2+i t, f)} \frac{q^{w/2}}{n^w} \frac{G(w)}{w} \mathrm{~d} w \\
			+&\frac{\varepsilon(f)q^{-it}}{2 \pi i} \sum_{n \geq 1} \frac{\overline{\lambda_f(n)}}{n^{1 / 2-i t}}  \int_{(2)} \frac{\gamma( 1 / 2-i t+w,f)}{\gamma( 1 / 2+i t,f)} \frac{q^{w/2}}{n^w} \frac{G(w)}{w} \mathrm{~d} w ,
		\end{aligned}
		$$
		
		\noindent where $G(w)=e^{w^2}$. Shifting the lines of integration to $\operatorname{Re}(w)=\varepsilon$, no poles are encountered in this shifting. By \cite[Proposition 5.4]{iwaniec2021analytic} we have
		$$
		\begin{aligned}
			\sum_{n>T^{d/2+\varepsilon}}\frac{\lambda_{f}(n)}{n^{1/2+it}}W_{1/2+it}(n)\ll\sum_{n>T^{d/2+\varepsilon}}\left|\frac{\lambda_{f}(n)}{n^{1/2+it  }}\right|\left(\frac{n}{t^{d/2}}\right)^{-A}\ll T^{-A}.
		\end{aligned}
		$$
		Hence  we can truncate the $n$-sum at $n \leq T^{d/2+\varepsilon}$ for the first sum and at $n \leq T^{d/2+\varepsilon}$ for the second sum above with a negligible error. By a simple calculation we have
		$$
		\begin{aligned}
			\sum_{n\leq T^{d/2+\varepsilon}}\frac{\lambda_{f}(n)}{n^{1/2+it+\varepsilon}}\int_{\varepsilon+i T^{\varepsilon}}^{\varepsilon+i\infty}\frac{\gamma(1 / 2+i t+w, f)}{\gamma(1 / 2+i t, f)} \frac{1}{n^w} \frac{G(w)}{w} \mathrm{~d} w\ll \int_{ T^{\varepsilon}}^{\infty}e^{-u^{2}}\mathrm{~d}u\ll e^{-T^{2\varepsilon}}.
		\end{aligned}
		$$
		Hence we can truncate at $|\operatorname{Im}(w)| \leq T^{\varepsilon}$ with a negligible error term. By Stirling's formula \eqref{stir1} and \eqref{stir2}, and then by a smooth partition of unity, we proved \eqref{huangapp}.
	\end{proof}
    
    In this paper we give the proof of Theorem \ref{t1} for even Hecke-Maass forms, the details for odd forms being entirely similar. Thus throughout, $f$ will denote an even Hecke-Maass cusp form for the full modular group with Laplacian eigenvalue $\frac{1}{4}+\mu^{2}$, where $\mu$ is real. Let $\lambda_{f}(n)$ denote the eigenvalue of the $n$-th Hecke operator corresponding to $f$. For the even Hecke-Maass cusp form, we define the $L$-function
    \begin{equation*}
    	L(s,f)=\sum_{n \geq 1} \lambda_f(n)n^{-s}.
    \end{equation*}
    The above function means that $d=2$, $\lambda_{f}(n)=\overline{\lambda_{f}(n)}$ and $q=\varepsilon(f)=1$
   
   Let the $L$-function in \eqref{huangapp} be $\zeta^{2}(s)$, and for $t\in[T,2T]$ we have that
    \begin{equation}\label{zetaapp1}
    	\begin{aligned}
    	\begin{aligned}
    		\zeta^2(1 / 2-i t) & =\frac{1}{2 \pi i} \int_{\varepsilon-i T^{\varepsilon}}^{\varepsilon+i T^{\varepsilon}} \sum_{\substack{M \leqslant T^{1 /+\varepsilon} \\
    				M \text { dyadic }}} \sum_{m \geqslant 1} \frac{d(m)}{m^{1 / 2-i t+w}} V_1\left(\frac{m}{M}\right)\left(\frac{t}{2 \pi}\right)^w e^{-i \pi w / 2} \frac{G(w)}{w} \mathrm{~d} w \\
    		& +\frac{1}{2 \pi i} \int_{\varepsilon-i T^{\varepsilon}}^{\varepsilon+i T^{\varepsilon}} \sum_{\substack{M \leqslant T^{1+\varepsilon} \\
    				M \text { dyadic }}} \sum_{m \geqslant 1} \frac{d(m)}{m^{1 / 2+i t+w}} V_1\left(\frac{m}{M}\right)\left(\frac{t}{2 \pi e}\right)^{2 i t}\left(\frac{t}{2 \pi}\right)^w \\
    		& \times e^{i \pi w / 2-i \pi / 2} \frac{G(w)}{w} \mathrm{~d} w+O\left(T^{-1 / 2+\varepsilon}\right).
    	\end{aligned}
    	\end{aligned}
    \end{equation}
    
	\subsection{Sums of Fourier coefficients}
	The Ramanujan conjecture for the Fourier coefficients of $f$ is known on average. By the Rankin-Selberg theory, we have
	\begin{lem}\label{3} We have 
		$$
		\begin{aligned}
			\sum_{n\leq x}|\lambda_{f}(n)|^{2}\ll x .
		\end{aligned}
		$$
	\end{lem}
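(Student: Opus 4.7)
The plan is to deduce the bound from the analytic properties of the Rankin--Selberg $L$-function $L(s,f\otimes\bar f)$, which is the standard route for this type of estimate.

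First I would introduce the Dirichlet series
\[
\sum_{n\geq 1}\frac{|\lambda_f(n)|^2}{n^s}=\frac{L(s,f\otimes\bar f)}{\zeta(2s)},
\]
valid for $\operatorname{Re}(s)>1$. By the Rankin--Selberg theory for $\mathrm{GL}(2)$ (applicable in both the Maass and holomorphic cases), $L(s,f\otimes\bar f)$ admits meromorphic continuation to $\mathbb{C}$ with a single simple pole at $s=1$, whose residue can be expressed in terms of $\operatorname{Res}_{s=1}L(s,f\otimes\bar f)$ (equivalently, up to normalization, $L(1,\mathrm{sym}^2 f)$), and satisfies a functional equation together with polynomial growth in vertical strips. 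Dividing by $\zeta(2s)$, which is holomorphic and nonvanishing in a suitable half-plane past $\operatorname{Re}(s)=1/2$, the quotient inherits a simple pole at $s=1$ with a positive residue $C_f$.

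Next I would apply Perron's formula (or, more efficiently, a standard Tauberian argument such as Landau's theorem) to $\sum_{n\leq x}|\lambda_f(n)|^2$. A contour shift from $\operatorname{Re}(s)=1+\varepsilon$ to a line $\operatorname{Re}(s)=\sigma_0<1$, using the convexity bound for $L(s,f\otimes\bar f)/\zeta(2s)$ in vertical strips, picks up the residue $C_f x$ at $s=1$ and leaves an error term of size $O(x^{\theta})$ for some $\theta<1$. This gives the asymptotic
\[
\sum_{n\leq x}|\lambda_f(n)|^2=C_f\,x+O(x^{\theta}),
\]
which in particular implies the claimed bound $\ll x$.

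The main obstacle, which is entirely classical here, is verifying the analytic input: the meromorphic continuation, functional equation, and polynomial growth of $L(s,f\otimes\bar f)$ on vertical lines. Since both are standard consequences of Rankin--Selberg theory for $\mathrm{GL}(2)\times\mathrm{GL}(2)$ and are uniform across the Maass and holomorphic settings, I would simply cite the relevant references (e.g.\ the treatment in Iwaniec--Kowalski) rather than re-derive them. For the lemma as stated---only an upper bound $\ll x$---the argument can in fact be shortened: one can avoid the Tauberian machinery altogether by noting that the nonnegativity of $|\lambda_f(n)|^2$ allows one to recover the partial sum from a smoothed version via a simple positivity comparison, and the smoothed version is controlled directly by a contour shift picking up only the pole at $s=1$.
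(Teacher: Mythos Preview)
Your proposal is correct and follows exactly the route the paper itself indicates: the paper does not give a proof but simply states the lemma as a consequence of Rankin--Selberg theory, and your outline is precisely the standard Rankin--Selberg argument underlying that citation. If anything, you have supplied more detail than the paper does.
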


   The following result (see \cite[Theorem 8.1]{iwaniec2021spectral}) shows that Fourier coefficients are orthogonal to additive characters on average.
	\begin{lem}\label{5} For any real number $\alpha$ and $\varepsilon>0$, we have
		$$
		\begin{aligned}
			\sum_{n\leq x}\lambda_{f}(n)e(\alpha n)\ll x^{1/2+\varepsilon}.
		\end{aligned}
		$$
	\end{lem}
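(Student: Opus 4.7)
The plan is to establish the bound via the Voronoi summation formula for Maass forms together with a Dirichlet rational approximation to $\alpha$. First, I would insert a smooth partition of unity: decompose the sum dyadically over the ranges $n \asymp N$ with $N \le x$, reducing to the estimation of the smooth sum
$$S(N,\alpha) = \sum_{n\ge 1} \lambda_f(n)\,e(\alpha n)\, V_0\!\left(\frac{n}{N}\right)$$
for a fixed smooth bump $V_0$ supported in $[1,2]$. Summing over the $O(\log x)$ dyadic scales, it suffices to prove $S(N,\alpha) \ll N^{1/2+\varepsilon}$ uniformly in $\alpha \in \mathbb{R}$.

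Next, by Dirichlet's theorem I would choose coprime integers $a,q$ with $1 \le q \le N^{1/2}$ and $|\beta| \le (q N^{1/2})^{-1}$, where $\beta = \alpha - a/q$, and apply the Voronoi summation formula for the Maass cusp form $f$ with additive twist $a/q$. Since $f$ has level one, this transforms $S(N,\alpha)$ into a dual sum of the shape
$$S(N,\alpha) = \frac{N}{q}\sum_{\pm}\sum_{m\ge 1}\lambda_f(m)\, S(\mp\bar a, m; q)\, \mathcal{I}^{\pm}(m),$$
where $S(\,\cdot\,,\,\cdot\,;q)$ is the classical Kloosterman sum and
$$\mathcal{I}^{\pm}(m) = \int_0^\infty V_0(y)\, e(\beta Ny)\, \mathcal{K}^{\pm}_{\mu}\!\left(\frac{4\pi\sqrt{m N y}}{q}\right) dy,$$
with $\mathcal{K}^{\pm}_{\mu}$ a linear combination of the Bessel kernels $J_{2i\mu}, Y_{2i\mu}, K_{2i\mu}$ determined by the spectral parameter $\mu$ of $f$.

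Then I would estimate $\mathcal{I}^{\pm}(m)$ by a combination of repeated integration by parts and stationary-phase analysis, using the standard uniform asymptotics for Bessel functions. This shows that $\mathcal{I}^{\pm}(m)$ is negligible once $m$ exceeds $q^2(1+|\beta|N)^2 N^{\varepsilon-1}$, and in the relevant range is of size roughly $(m N/q^2)^{-1/4}$ up to logarithmic factors. Substituting Weil's bound $|S(\bar a,m;q)| \le d(q)\, q^{1/2}(m,q)^{1/2}$ and estimating the remaining $m$-sum by Cauchy--Schwarz in conjunction with the Rankin--Selberg mean-value estimate of Lemma \ref{3}, one is led to a total contribution of order $N^{1/2+\varepsilon}$, uniformly in $\alpha$, which is the desired bound.

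The main obstacle is the uniform control of $\mathcal{I}^{\pm}(m)$ in the two regimes $|\beta|N \ll 1$ and $|\beta|N \gg 1$: in the latter case a genuine stationary-phase point enters the support of $V_0$, and one must verify that the resulting contribution balances Weil's bound against the Dirichlet constraint $q \le N^{1/2}$ so as to avoid any loss in the final exponent. Since $f$ is fixed throughout the paper, the spectral parameter $\mu = O(1)$ is harmless and uniformity in $f$ is not required; the entire argument relies only on standard ingredients (Voronoi, Weil, Rankin--Selberg) that are independent of the paper's main technical computations.
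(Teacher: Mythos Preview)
The paper does not prove this lemma; it simply cites \cite[Theorem~8.1]{iwaniec2021spectral}. The argument there is more direct than your Voronoi route: one exploits the Fourier expansion of the cusp form $f$ at the point $\alpha+iy$ and the uniform bound $f(z)\ll_f 1$ furnished by cuspidality, then chooses $y\asymp 1/x$ to balance the exponential weight against the length of the sum. No rational approximation, Bessel-kernel asymptotics, or Kloosterman sums are required.

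Your Voronoi strategy is a legitimate alternative, but the dual sum you wrote down is wrong. For a level-one cusp form the Voronoi formula (Lemma~\ref{Vor} in the paper) produces the additive character $e(\mp\bar a m/q)$ on the dual side, \emph{not} the Kloosterman sum $S(\mp\bar a,m;q)$; Kloosterman sums arise in the Voronoi formula for $d(n)$, not for $\lambda_f(n)$. Consequently Weil's bound plays no role here---inserting it as you do would only lose an unnecessary factor of $q^{1/2}$. With the additive character in place one uses the trivial bound $|e(\cdot)|=1$, and the estimate follows by balancing the prefactor $N/q$ against the effective dual length $q^2(1+|\beta|N)^2/N \ll N^{\varepsilon}$ (guaranteed by the Dirichlet constraint $q\le N^{1/2}$, $|\beta|\le (qN^{1/2})^{-1}$) together with your stationary-phase bounds for $\Phi^{\pm}$. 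After this correction your sketch does give $N^{1/2+\varepsilon}$, but the direct argument in Iwaniec is both shorter and what the paper actually invokes.
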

	For $F(x) \in \mathcal{C}_C(0, \infty)$, we define the integral transforms
	$$
	\begin{aligned}
		& \Phi_F^{+}(x)=\frac{-\pi}{\sin (\pi i \mu)} \int_0^{\infty} F(y)\left(J_{2 i \mu}(4 \pi \sqrt{x y})-J_{-2 i \mu}(4 \pi \sqrt{x y})\right) \mathrm{d} y, \\
		& \Phi_F^{-}(x)=4\cosh (\pi \mu) \int_0^{\infty} F(y) K_{2 i \mu}(4 \pi \sqrt{x y}) \mathrm{d} y.
	\end{aligned}
	$$
	We have the following Voronoi summation formula (see \cite[Theorem A.4]{kowalski2002rankin}).
		\begin{lem}\label{Vor} Let $q\in \mathbb{N}$ and $a\in \mathbb{Z}$ be such that $(a,q)=1$. For $N>0$, we have
		$$
		\begin{aligned}
		\sum_{n=1}^{\infty} \lambda_f(n) e\left(\frac{a n}{q}\right) F\left(\frac{n}{N}\right)=\frac{N}{q} \sum_{ \pm} \sum_{n=1}^{\infty} \lambda_f(n) e\left(\mp \frac{\bar{a} n}{q}\right) \Phi_F^{ \pm}\left(\frac{n N}{q^2}\right),
		\end{aligned}
		$$
		where $a \bar{a} \equiv 1(\bmod q)$.
	\end{lem}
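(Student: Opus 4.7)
The plan is to derive the formula by Mellin inversion combined with the functional equation of the additive twist
$$
L_{a/q}(s,f):=\sum_{n\geq 1}\lambda_f(n)\,e(an/q)\,n^{-s},\qquad \operatorname{Re}(s)>1.
$$
First, since $F\in C_c^{\infty}(0,\infty)$, Mellin inversion gives
$$
\sum_{n\geq 1}\lambda_f(n)\,e(an/q)\,F(n/N)=\frac{1}{2\pi i}\int_{(2)}N^s\,\widetilde F(s)\,L_{a/q}(s,f)\,ds,
$$
where $\widetilde F(s)=\int_0^\infty F(y)y^{s-1}\,dy$ has rapid decay on vertical lines.

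Second, I would establish a functional equation for $L_{a/q}(s,f)$. Writing
$$
e(an/q)=\frac{1}{\varphi(q)}\sum_{\chi\bmod q}\tau(\chi)\chi(a)\overline{\chi(n)}
$$
for $(n,q)=1$ (with the residual part handled via Hecke-multiplicativity of $\lambda_f$), one expresses $L_{a/q}(s,f)$ as a linear combination of twisted $L$-functions $L(s,f\otimes\overline\chi)$. Each of these satisfies the standard $GL(2)$ functional equation with gamma factors depending on the parity of $\chi$. Recombining via Gauss-sum identities yields
$$
L_{a/q}(s,f)=q^{2s-1}\,\frac{\gamma(1-s,f)}{\gamma(s,f)}\sum_\pm C_\pm(s)\,L_{\mp\bar a/q}(1-s,f),
$$
with $a\bar a\equiv 1\pmod q$ and $C_\pm(s)$ explicit trigonometric factors encoding the even/odd parity of $\chi$.

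Third, substituting this into the contour integral, shifting to $\operatorname{Re}(s)=-A$ (no poles, since $f$ is cuspidal), expanding $L_{\mp\bar a/q}(1-s,f)$ as a Dirichlet series, and interchanging, I obtain for each $n\geq 1$ an integral of the shape
$$
\frac{N}{q}\cdot\frac{1}{2\pi i}\int_{(-A)}\left(\frac{nN}{q^2}\right)^{-s}\frac{\gamma(1-s,f)}{\gamma(s,f)}\,C_\pm(s)\,\widetilde F(s)\,ds.
$$
The Mellin--Barnes representations of $J_{\pm 2i\mu}$ and $K_{2i\mu}$ together with inverse Mellin transforms identify this integral precisely as $\Phi_F^\pm(nN/q^2)$; the constants $-\pi/\sin(\pi i \mu)$ and $4\cosh(\pi \mu)$ emerge from applying the reflection and duplication identities of $\Gamma$ to the ratio $\gamma(1-s,f)/\gamma(s,f)$.

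The main obstacle will be the second step: organizing the even/odd character split so that the dual side collapses cleanly into the two-term sum $\sum_\pm$ rather than a proliferation of twisted $L$-functions indexed by all $\chi\bmod q$. The $J$-Bessel versus $K$-Bessel dichotomy in $\Phi_F^\pm$ is exactly the analytic signature of this parity split, and tracking the Gauss-sum constants through the functional equation to recover the final additive twist $e(\mp\bar a n/q)$ on the dual side is the computational heart of the proof.
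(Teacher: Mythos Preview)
The paper does not actually prove this lemma; it is simply quoted from the literature (Kowalski--Michel--VanderKam, \emph{Rankin--Selberg $L$-functions in the level aspect}, Theorem~A.4). So there is no proof in the paper to compare against---your proposal goes well beyond what the paper does.

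That said, your route is not the one taken in the cited reference, and the difference is worth noting. The standard proof of the Voronoi formula for Maass forms (as in KMV, or Meurman) establishes the functional equation of the additive twist $L_{a/q}(s,f)$ \emph{directly} from the automorphy of $f$: one pairs $f$ against a suitable Poincar\'e-type series, or equivalently uses that the matrix $\left(\begin{smallmatrix} \bar a & * \\ q & -a \end{smallmatrix}\right)\in SL_2(\mathbb{Z})$ sends the cusp $a/q$ to $\infty$, which relates $L_{a/q}(s,f)$ to $L_{-\bar a/q}(1-s,f)$ in one stroke. The Bessel kernels $J_{\pm 2i\mu}$ and $K_{2i\mu}$ then appear naturally from the Whittaker expansion of $f$. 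Your approach via multiplicative characters is valid in principle, but it is technically heavier: the identity $e(an/q)=\varphi(q)^{-1}\sum_\chi \tau(\overline\chi)\chi(an)$ only holds for $(n,q)=1$, so you must separately treat the terms with $\gcd(n,q)>1$ via Hecke relations; moreover the sum over $\chi$ includes imprimitive characters, whose Gauss sums and functional equations are irregular, and recombining everything back into a clean two-term $\sum_\pm$ with additive twist $e(\mp\bar a n/q)$ on the dual side is a genuine bookkeeping exercise. None of this is insurmountable, but it is precisely the ``main obstacle'' you yourself flag, and the direct automorphy argument bypasses it entirely.
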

	The function $\Phi_F^{ \pm}(x)$ has the following properties (see \cite[Lemma 3.4]{lin2021analytic}).
	 \begin{lem}\label{Phi} For any fixed integer $J\geq 1$ and $x\gg1$, we have $\Phi_F^{-}(x)\ll x^{-2024}$, and
	 	$$
	 	\begin{aligned}
	 		\Phi_F^{+}(x)=x^{-1 / 4} \int_0^{\infty} F(y) y^{-1 / 4} \sum_{j=0}^J \frac{c_j e(2 \sqrt{x y})+d_j e(-2 \sqrt{x y})}{(x y)^{j / 2}} \mathrm{~d} y+O_{\mu, J}\left(x^{-J / 2-3 / 4}\right),
	 	\end{aligned}
	 	$$
	 	where $c_{j}$ and $d_{j}$ are some constants depending on $\mu$.
	 \end{lem}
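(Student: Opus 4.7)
The plan is to derive both bounds from the classical large-argument asymptotics of the $K$- and $J$-Bessel functions. Since $F\in\mathcal{C}_C(0,\infty)$ is compactly supported and bounded away from zero on its support, in both transforms the argument $z:=4\pi\sqrt{xy}$ is of order $\sqrt{x}$ uniformly in $y$, so large $x$ forces large $z$ and the Bessel asymptotics apply uniformly.

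I would first dispatch $\Phi_F^{-}$ by invoking the exponential decay bound $K_{2i\mu}(z)\ll_{\mu}z^{-1/2}e^{-z}$ valid for $z\gg 1$. Substituting $z=4\pi\sqrt{xy}$ produces an exponentially small factor $e^{-c\sqrt{x}}$ in the integrand, which beats any negative power of $x$; in particular the stated bound $\Phi_F^{-}(x)\ll x^{-2024}$ is immediate (indeed one gets $\ll x^{-A}$ for every $A>0$).

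For $\Phi_F^{+}$, the key step is to insert the standard large-argument expansion
$$J_\nu(z)=\sqrt{\frac{2}{\pi z}}\Bigl(e^{iz}\sum_{j=0}^{J}\frac{a_j(\nu)}{z^{j}}+e^{-iz}\sum_{j=0}^{J}\frac{b_j(\nu)}{z^{j}}\Bigr)+O_{\nu,J}\bigl(z^{-J-3/2}\bigr)$$
with $\nu=\pm 2i\mu$, then form the difference $J_{2i\mu}-J_{-2i\mu}$ and divide by $\sin(\pi i\mu)$ together with the prefactor $-\pi$. After substituting $z=4\pi\sqrt{xy}$, the prefactor $\sqrt{2/(\pi z)}\cdot z^{-j}$ becomes a constant multiple of $(xy)^{-1/4-j/2}$, while $e^{\pm iz}$ becomes $e(\pm 2\sqrt{xy})$, producing the claimed shape with new constants $c_j,d_j$ depending on $\mu$. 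The error contribution from truncating at order $J$ is controlled by $\int F(y)(xy)^{-J/2-3/4}\,\mathrm{d}y\ll_{\mu,J,F}x^{-J/2-3/4}$, matching the stated remainder.

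The main point to verify, and the only place where care is needed, is the bookkeeping of the constants $c_j,d_j$ produced by combining $-\pi/\sin(\pi i\mu)$ with the classical expansion coefficients $a_j(\pm 2i\mu),b_j(\pm 2i\mu)$; no analytic subtlety arises once one checks that the Bessel asymptotics hold uniformly in $y$ on the support of $F$. Since this computation is essentially carried out in \cite[Lemma 3.4]{lin2021analytic}, in practice I would invoke that reference after confirming that the normalizations in the definitions of $\Phi_F^{\pm}$ match those used there.
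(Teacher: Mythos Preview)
Your proposal is correct and matches the paper's approach: the paper does not supply a proof but simply cites \cite[Lemma 3.4]{lin2021analytic}, and your sketch is precisely the standard argument behind that reference, via the exponential decay of $K_{2i\mu}$ and the large-argument asymptotic expansion of $J_{\pm 2i\mu}$.
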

	 \begin{rem}
	 	For $x\ll1$, usually the bound $x^{j}J_{\nu}^{(j)}\ll_{\nu, j}1$ would be sufficient in applications.
	 \end{rem}
	\subsection{Estimation of integrals}
	We will use the following stationary phase lemma several times.
	\begin{lem}\label{6} Suppose $w$ is a smooth function with compact support on $[Z, 2 Z]$ such that $w^{(j)}(\xi) \ll(Z / X)^{-j}$. Suppose that on the support of $w$, $h$ is smooth and satisfies that $h^{(j)}(\xi) \ll \frac{Y}{Z^{j}}$, for all $j \geq 0$. Let
		
		$$
		I=\int_{\mathbb{R}} w(\xi) e^{i h(\xi)} \mathrm{d} \xi.
		$$
		(i) If $h^{\prime}(\xi) \gg \frac{Y}{Z}$ for all $\xi \in \operatorname{supp} w$. Suppose $Y / X \geq 1$. Then $I \ll_A Z(Y / X)^{-A}$ for $A$ arbitrarily large.\\
		(ii) If $h^{\prime \prime}(\xi) \gg \frac{Y}{Z^2}$ for all $\xi \in \operatorname{supp} w$, and there exists $\xi_0 \in \mathbb{R}$ such that $h^{\prime}\left(\xi_0\right)=0$. Suppose that $Y / X^2 \geq R\geq1$. Then we have
		
		$$
		I=\frac{e^{i h\left(\xi_0\right)}}{\sqrt{h^{\prime \prime}\left(\xi_0\right)}} W(\xi_0)+O_A(ZR^{-A}),\quad \text { for any } A>0,
		$$
		for some smooth functions $W(\xi)$ (depending on $A$) supported on $\xi\asymp Z$.
	\end{lem}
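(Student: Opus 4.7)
Both parts are standard stationary phase results; I plan to prove (i) by iterated integration by parts and (ii) by localising to the stationary point, rescaling to a Morse-type normal form, and then applying the classical Fresnel asymptotic expansion.

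For part (i), let $\mathcal{D}g=(g/(ih'))'$. Since $(e^{ih})'=ih'e^{ih}$ and $w$ has compact support, $A$-fold integration by parts gives $I=\int(-\mathcal{D})^A(w)(\xi)e^{ih(\xi)}\,d\xi$ with no boundary contribution. Combining $|1/h'|\ll Z/Y$ (from $|h'|\gg Y/Z$) with $|h^{(j)}|\ll Y/Z^j$ and $|w^{(j)}|\ll(X/Z)^j$, a Leibniz expansion shows that each application of $\mathcal{D}$ costs at most $(X/Z)(Z/Y)=X/Y$ in size (the $w'/(ih')$ summand dominates $wh''/(ih')^2$ under $X\geq 1$). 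Iterating $A$ times yields $|\mathcal{D}^A(w)|\ll(X/Y)^A$ on a support of length $\asymp Z$, so $|I|\ll Z(Y/X)^{-A}$.

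For part (ii), I introduce a smooth cutoff $\phi$ centered at $\xi_0$ of width $\asymp Z/X$ and split $I=I_1+I_2$. On $\operatorname{supp}(1-\phi)$ the hypotheses $h'(\xi_0)=0$ and $h''\gg Y/Z^2$ force $|h'(\xi)|\gg Y/(XZ)$, so the same integration-by-parts argument as in (i), each step now gaining a factor $X^2/Y$, yields $I_2\ll_A Z(Y/X^2)^{-A}\leq ZR^{-A}$. For $I_1$, I change variables $\xi=\xi_0+(Z/X)v$: the rescaled amplitude $\Psi(v)$ is smooth with uniformly bounded derivatives on a bounded support, and the rescaled phase reads $h(\xi_0)+\alpha F(v)$ with $\alpha:=h''(\xi_0)(Z/X)^2\asymp Y/X^2\geq R$ and $F(v)=v^2/2+r(v)/\alpha$. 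A direct Taylor computation gives $F'(0)=0$, $F''(0)=1$, and $|F^{(j)}(v)|\ll 1$ uniformly for all $j\geq 0$ on the bounded support (since $|r^{(j)}(v)|\ll Y/X^3=\alpha/X$, so $|r^{(j)}/\alpha|\ll 1/X\leq 1$). The classical stationary phase expansion with large parameter $\alpha$ then gives, to order $A$,
\[
I_1=\frac{Z}{X}\,e^{ih(\xi_0)}\sqrt{\frac{2\pi}{i\alpha}}\Bigl(\Psi(0)+\alpha^{-1}c_1+\cdots+\alpha^{-(A-1)}c_{A-1}\Bigr)+O\!\left(\frac{Z}{X}\alpha^{-A-1/2}\right).
\]
Since $(Z/X)/\sqrt{\alpha}=1/\sqrt{h''(\xi_0)}$ exactly, collecting the bracketed polynomial-in-$\alpha^{-1}$ into a smooth amplitude $W$ supported on $\xi\asymp Z$ produces the stated main term $e^{ih(\xi_0)}W(\xi_0)/\sqrt{h''(\xi_0)}$, and the residual is $\ll(Z/\sqrt{Y})R^{-A}\leq ZR^{-A}$ because $\alpha\geq R$.

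The only substantive step is the bookkeeping in the stationary phase expansion: the hypothesis $Y/X^2\geq R\geq 1$ together with $X\geq 1$ is precisely what guarantees both $\alpha\geq R$ (so each order of the expansion gains a factor $R^{-1}$) and that $F$ is a uniformly bounded perturbation of the pure quadratic $v^2/2$, allowing the expansion to be applied with constants independent of the four scales $X,Y,Z,R$. Everything else reduces to the standard one-dimensional Fresnel formula and is routine.
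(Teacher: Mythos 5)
Your argument is correct in substance, but it takes a genuinely different route from the paper, which offers no argument of its own: the paper's ``proof'' of this lemma is a citation of \cite[Lemma 3.1]{kiral2019oscillatory} (with \cite[Proposition 8.2]{blomer2013distribution} invoked in Remark~\ref{rem} for the explicit expansion), whereas you reprove the statement from scratch --- part (i) by iterated integration by parts, part (ii) by cutting at scale $Z/X$ around $\xi_0$, treating the outer range as in (i) via the lower bound $|h'|\gg Y/(XZ)$, and rescaling the inner range to a Fresnel integral with large parameter $\alpha=h''(\xi_0)(Z/X)^2\asymp Y/X^2\ge R$. This is the same mechanism that underlies the cited lemma, so your self-contained proof is a legitimate substitute. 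What the citation buys, and what you should make explicit, is the \emph{inert-family} structure of the output: the paper later uses derivative bounds on $W$ (e.g.\ $W_1^{(i)}(x)\ll1$, $W_2^{(i)}(\xi)\ll\Delta^i$) and Mellin-inverts it, so one needs the expansion coefficients $c_k$ (the $p_n(\xi_0)$ of Remark~\ref{rem}) to depend smoothly, with controlled derivatives, on $\xi_0$ and the auxiliary parameters; your construction does yield this, since the $c_k$ are explicit polynomials in derivatives of $\Psi$ and $F$ at the stationary point, but it is not delivered by the pointwise identity alone --- for a single fixed integral the clause ``for some smooth $W$ supported on $\xi\asymp Z$'' is nearly vacuous. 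Two smaller bookkeeping points: the domination of $wh''/(ih')^2$ by $w'/(ih')$ in (i) and the bound $r^{(j)}/\alpha\ll 1/X$ in (ii) both use the normalisation $X\ge1$, which is implicit in the statement and satisfied in every application here (where $X=\Delta\ge1$); and to iterate $\mathcal{D}$ one needs the standard induction that $\mathcal{D}^k(w)$ again satisfies $\partial_\xi^{j}\mathcal{D}^k(w)\ll (X/Y)^k(X/Z)^j$ (equivalently the Fa\`a di Bruno term count, using $(1/h')^{(j)}\ll (Z/Y)Z^{-j}$), not merely the size bound you state; finally, if $\xi_0$ lies well outside $[Z,2Z]$ the whole integral is absorbed into the error term by the argument of (i), consistent with $W$ being supported on $\xi\asymp Z$.
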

\begin{proof}
	See \cite[Lemma 3.1]{kiral2019oscillatory}. In our case, the functions $w$ and $W$ satisfy the $X$-inert condition as stated in \cite[Lemma 3.1]{kiral2019oscillatory}.
\end{proof}
	\begin{rem}\label{rem}
	To compute the main term, we will need to use a more explicit version of stationary phase method (see \cite[Proposition 8.2]{blomer2013distribution}). In particular, the function $W$ has an asymptotic expansion of the form
	$$
		\begin{aligned}
			W(\xi_{0})=\sum_{n \leq \varepsilon^{-1}A} p_n\left(\xi_0\right),\quad
			p_n\left(\xi_0\right)=\frac{\sqrt{2 \pi} e^{\pi i / 4}}{n!}\left(\frac{i}{2 h^{\prime \prime}\left(\xi_0\right)}\right)^n G^{(2 n)}\left(\xi_0\right),
		\end{aligned}
		$$
		where $A>0$ is arbitrary, and
		$$
		\begin{aligned}
		G(\xi)=w(\xi) e^{i H(\xi)}, \quad H(\xi)=h(\xi)-h\left(\xi_0\right)-\frac{1}{2} h^{\prime \prime}\left(\xi_0\right)\left(\xi-\xi_0\right)^2 .
	\end{aligned}
	$$
	The leading term comes from $n=0$.
	\end{rem}

	\begin{lem}\label{7} For any $T>0$ and any sequence of complex numbers $a_n$, we have 
		$$
		\begin{aligned}
			\int_0^T{\left|\sum_{n \leq N} a_n n^{i t}\right|} ^2 \mathrm{~d} t \ll(T+N) \sum_{n \leq N}\left|a_n\right|^2.
		\end{aligned}
		$$
	\end{lem}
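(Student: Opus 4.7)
The plan is to expand the modulus-squared and reduce the problem to a generalized Hilbert inequality of Montgomery--Vaughan type. Setting $D(t) = \sum_{n \leq N} a_n n^{it}$, I would first write
$$
\int_0^T |D(t)|^2 \, \mathrm{d}t = \sum_{m,n \leq N} a_m \overline{a_n} \int_0^T \left(\frac{m}{n}\right)^{it} \mathrm{d}t.
$$
The diagonal contribution $m = n$ immediately yields $T \sum_{n \leq N} |a_n|^2$, which is already the $T$-term in the claimed bound. For $m \neq n$, the inner integral equals $((m/n)^{iT} - 1)/(i \log(m/n))$, which is bounded in absolute value by $2/|\log(m/n)|$.

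The remaining task is therefore to establish the off-diagonal estimate
$$
\Biggl|\sum_{\substack{m, n \leq N \\ m \neq n}} \frac{a_m \overline{a_n}}{\log(m/n)}\Biggr| \ll N \sum_{n \leq N} |a_n|^2.
$$
For this I would invoke the generalized Hilbert inequality of Montgomery--Vaughan: for real numbers $\lambda_1 < \lambda_2 < \cdots$ with minimum gap $\delta_n = \min_{m \neq n} |\lambda_m - \lambda_n|$, one has
$$
\Biggl|\sum_{m \neq n} \frac{a_m \overline{a_n}}{\lambda_m - \lambda_n}\Biggr| \ll \sum_n \frac{|a_n|^2}{\delta_n}.
$$
Applying this with $\lambda_n = \log n$, for which $\delta_n = \log(1 + 1/n) \asymp 1/n$, produces the bound $\ll \sum_{n \leq N} n |a_n|^2 \leq N \sum_{n \leq N} |a_n|^2$, exactly what is needed. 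Combining the diagonal and off-diagonal contributions gives the stated $\ll (T + N) \sum |a_n|^2$.

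The main obstacle is the off-diagonal bound: a naive termwise estimate using $|\log(m/n)|^{-1} \ll \max(m,n)/|m-n|$ followed by dyadic decomposition would accumulate an extra logarithmic factor and fail to deliver the clean $N$-dependence. Obtaining the sharp constant genuinely requires the Montgomery--Vaughan inequality (or, equivalently, an appeal to the additive large sieve), which is classical but nontrivial. Once this input is granted, the rest of the argument is a direct calculation.
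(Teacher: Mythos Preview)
Your proposal is correct and gives the standard proof of this classical mean-value theorem for Dirichlet polynomials. The paper itself does not supply a proof at all: it simply cites \cite[Theorem~9.1]{iwaniec2021analytic}. Your sketch via the Montgomery--Vaughan generalized Hilbert inequality is precisely the argument one finds behind that reference, so there is nothing to add or correct.
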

	\begin{proof}
	See \cite[Theorem 9.1]{iwaniec2021analytic}. 
	\end{proof}
	
	\subsection{Averages of Kloosterman sums}
	Here and throughout, for $a,b,c\in \mathbb{Z}$, we write
	$$
	\begin{aligned}
	S(a, b ; c):=\sideset{}{^{*}}\sum_{x(\bmod c)}\mathrm{e}\left(\frac{a x+b \bar{x}}{c}\right)
	\end{aligned}
    $$
    for the classical Kloosterman sum.
    \begin{lem}\label{8}
    	Let $n, k\in \mathbb{Z}$ with $r\ge 1$, and let $W \in \mathcal{C}_c^{\infty}(0,\infty)$, Then
    	$$
    \begin{aligned}
    	\sum_{\substack{k\\(k,r)=1}} \mathrm{e}\left(\frac{n\overline{k }}{r}\right) W\left(\frac{k}{K}\right)=\frac{K}{r}  \sum_k S\left(k, n  ; r\right) \widehat{W}\left(\frac{k K}{r}\right) .
    \end{aligned}
    $$
    \end{lem}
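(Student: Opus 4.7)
The plan is to open up the coprimality condition via residue classes modulo $r$ and then apply Poisson summation to the resulting smooth sum along an arithmetic progression; the Kloosterman sum then emerges naturally from the average over reduced residues.

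First I would parametrize each $k$ with $(k,r)=1$ as $k = a + mr$, where $a$ runs over reduced residues modulo $r$ and $m \in \mathbb{Z}$. Since $\overline{k} \equiv \overline{a} \pmod{r}$, the phase $\mathrm{e}(n\overline{k}/r)$ depends only on the residue class, so the left-hand side factors as
\[
\sideset{}{^{*}}\sum_{a (\bmod r)} \mathrm{e}\!\left(\frac{n\overline{a}}{r}\right) \sum_{m \in \mathbb{Z}} W\!\left(\frac{a+mr}{K}\right).
\]
Next I would apply Poisson summation to the inner sum over $m$. Computing the Fourier transform of $x \mapsto W((a+xr)/K)$ via the change of variable $u = (a+xr)/K$ gives
\[
\sum_{m \in \mathbb{Z}} W\!\left(\frac{a+mr}{K}\right) = \frac{K}{r}\sum_{\ell \in \mathbb{Z}} \widehat{W}\!\left(\frac{\ell K}{r}\right) \mathrm{e}\!\left(\frac{\ell a}{r}\right),
\]
which is legitimate because $W \in \mathcal{C}_c^{\infty}(0,\infty)$ makes $\widehat{W}$ Schwartz, so both sides are absolutely convergent.

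Finally I would substitute this back into the factored expression and interchange the order of summation. The inner sum over $a$ collapses to
\[
\sideset{}{^{*}}\sum_{a (\bmod r)} \mathrm{e}\!\left(\frac{\ell a + n\overline{a}}{r}\right) = S(\ell, n; r)
\]
directly from the definition of the Kloosterman sum, and this yields the claimed identity (with $\ell$ playing the role of the dummy variable on the right-hand side of the statement). The argument is essentially mechanical; there is no serious analytic obstacle, since the only nontrivial point is the justification of Poisson summation, which is immediate from the compact support and smoothness of $W$.
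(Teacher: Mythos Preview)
Your proof is correct and is the standard self-contained argument for this identity: split into reduced residue classes modulo $r$, apply Poisson summation along each progression, and recognize the resulting complete exponential sum as a Kloosterman sum. The paper does not give an argument at all but simply cites \cite[Lemma~7.2]{chandee2024sixth} (specialized with $\alpha=e\nu_1 g=1$); your derivation is essentially what that cited lemma's proof must contain, so there is no meaningful mathematical difference---you have just unpacked the reference. One cosmetic remark: the statement as printed has a clash of dummy variables (the right-hand side sums over ``$f$'' but writes $k$ inside), and your use of a fresh name $\ell$ for the dual variable is cleaner.
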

    \begin{proof}
    	The result follows from the Lemma 7.2 in \cite{chandee2024sixth} by taking $\alpha=e\nu_1 g=1$.
    \end{proof}
    To deal with the averages of Kloosterman sums we shall use the following result \cite[Theorem 9]{deshouillers1982kloosterman}.
    \begin{lem}\label{9}
    	Let $C, M, N$ be positive real numbers and $g:{\mathbb{R}}^{3}\rightarrow\mathbb{R}$ be a smooth function with compact support on $[C,3C]\times[M,3M]\times[N,3N]$ such that
    
    $$
    \left|\frac{\partial^{j+k+l}}{\partial m^j \partial n^k \partial c^l} g(m, n, c)\right| \leqq M^{-j} N^{-k} C^{-t} \quad \text { for } 0 \leqq j, k, l \leqq 2 .
    $$
    For any $\varepsilon>0$ and complex sequences $\mathbf{a}, \mathbf{b}$ one has
    
    $$
    \begin{aligned}
    	\sum_{c}  \sum_m a_m \sum_n b_n g(m, n, c) S(m , \pm n ;  c)\ll C^{1+\varepsilon} \sqrt{M N}\left\|\mathbf{a}\right\|_2\left\|\mathbf{b}\right\|_2
    \end{aligned}
    $$
    where $\|\mathbf{a}\|_2=\sqrt{\sum_m\left|a_m\right|^2}$ and $\|\mathbf{b}\|_2=\sqrt{\sum_n\left|b_n\right|^2}$.
    \end{lem}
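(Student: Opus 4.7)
The plan is to follow the Deshouillers--Iwaniec strategy of opening the Kloosterman sums via Kuznetsov's trace formula and reducing to the spectral large sieve for Fourier coefficients of automorphic forms. As a first step I would separate the variables $m,n,c$ in the weight $g$. Using Mellin inversion on each variable,
$$
g(m,n,c)=\frac{1}{(2\pi i)^3}\int\!\!\int\!\!\int \widetilde g(s_1,s_2,s_3)\, m^{-s_1}n^{-s_2}c^{-s_3}\, ds_1\,ds_2\,ds_3,
$$
the hypothesis $|\partial^{j+k+\ell}g|\ll M^{-j}N^{-k}C^{-\ell}$ for $j,k,\ell\le 2$ makes $\widetilde g$ decay polynomially on vertical lines. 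This reduces the problem, up to a harmless $T^\varepsilon$ loss and a multiplicative twist of $a_m,b_n$ that does not affect $\ell^2$-norms, to the factored case $g(m,n,c)=g_1(m/M)g_2(n/N)g_3(c/C)$.

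The second step is to apply the Kuznetsov formula to the inner $c$-sum. Write the weight as
$$
\frac{1}{c}\,\varphi\!\left(\frac{4\pi\sqrt{mn}}{c}\right), \qquad \varphi(x)=\frac{4\pi\sqrt{mn}}{x}\,g_3\!\left(\frac{4\pi\sqrt{mn}}{xC}\right),
$$
so that the Bessel kernel matches the one in Kuznetsov. Then
$$
\sum_c\frac{1}{c}\,g_3(c/C)\,S(m,\pm n;c)\,\varphi\!\left(\tfrac{4\pi\sqrt{mn}}{c}\right)
$$
expands into contributions from the discrete Maass spectrum, the holomorphic spectrum (for the $+$ sign) and the Eisenstein continuous spectrum, each weighted by a Bessel transform $\check\varphi(t_j)$ of $\varphi$. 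A stationary-phase analysis of this transform, of exactly the kind encoded in Lemma \ref{6}, shows that $\check\varphi$ is effectively supported on spectral parameters $|t_j|\lesssim T_0:=\sqrt{MN}/C$ (plus, for the $-$ sign, a very small range near the real axis controlled by the $K$-Bessel function).

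The third step is to insert the weights $a_m g_1(m/M)$ and $b_n g_2(n/N)$, sum over $m,n$, and apply Cauchy--Schwarz to split the $m$- and $n$-variables in the spectral expansion. The $m$-sum becomes $\sum_m a_m\sqrt m\,\rho_j(m)g_1(m/M)$ (with the analogous expression for $n$), and one is reduced to bounding
$$
\sum_{|t_j|\le T_0}\frac{1}{\cosh\pi t_j}\Bigl|\sum_{m\sim M}a_m\sqrt m\,\rho_j(m)\Bigr|^2,
$$
together with parallel sums over the holomorphic and Eisenstein pieces. Here one invokes the Deshouillers--Iwaniec spectral large sieve, which bounds this quantity by $(T_0^2+M^{1+\varepsilon})\|\mathbf a\|_2^2$. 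Combining the $m$- and $n$-estimates and using $T_0^2=MN/C^2$ gives precisely the factor $C^{1+\varepsilon}\sqrt{MN}$ claimed.

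The main technical obstacle is not any single step but the careful control of the Bessel-transform weight, together with verifying that the conditions of the spectral large sieve apply uniformly. The negative-sign case is more delicate than the positive-sign one because the $K$-Bessel transform sees near-zero and small-imaginary spectral parameters (the exceptional eigenvalues), and must be handled through the cosh-normalized form of the large sieve; the positive-sign case, by contrast, reduces cleanly after bounding $\check\varphi(t_j)$ on the unitary line by stationary phase. Since the result is the content of \cite[Theorem 9]{deshouillers1982kloosterman}, I would cite it directly in the present paper rather than reproducing this argument.
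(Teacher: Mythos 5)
Your conclusion matches the paper exactly: Lemma \ref{9} is not proved there but is simply quoted from \cite[Theorem 9]{deshouillers1982kloosterman}, which is also what you propose to do. Your sketch of the underlying Deshouillers--Iwaniec argument (separation of variables, Kuznetsov formula on the $c$-sum, Bessel-transform localization of the spectral parameter near $\sqrt{MN}/C$, then the spectral large sieve) is a faithful outline of their proof, so there is nothing to correct.
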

	\section{Mellin transform and Proof of Theorem \ref{t1}}
	
	Applying the approximate functional equations \eqref{huangapp} and \eqref{zetaapp1}, we have that the left hand side of \eqref{eqt1} equals 
	\begin{align}\label{eight}
	&\frac{1}{(2 \pi i)^2}\int_{\mathbb{R}}V\left(\frac{t}{T}\right)\Bigg(\int_{\varepsilon-i T^{\varepsilon}}^{\varepsilon+i T^{\varepsilon}} \sum_{\substack{N \leq T^{1+\varepsilon}\nonumber \\
			N\,d y a d i c}} \sum_{n \geq 1} \frac{\lambda_f(n)}{n^{1 / 2+i t+w}} V_1\left(\frac{n}{N}\right)\left(\frac{ t}{2\pi}\right)^{w} e^{i\pi w / 2}\frac{G(w)}{w} \mathrm{~d} w\\
	&+\int_{\varepsilon-i T^\varepsilon}^{\varepsilon+i T^\varepsilon} \sum_{\substack{N \leq T^{1+\varepsilon} \nonumber\\
			N\,d y a d i c}} \sum_{n \geq 1} \frac{\lambda_f(n)}{n^{1 / 2-i t+w}} V_1\left(\frac{n}{N}\right)\left(\frac{t}{2 \pi e}\right)^{-2it}\left(\frac{t}{2 \pi}\right)^{w}e^{i\pi(-  w / 2+ 1/2)} \frac{G(w)}{w} \mathrm{~d} w\nonumber\\
	&+O\left(T^{-1/ 2+\varepsilon}\right) \Bigg)\nonumber\\
	&\times\Bigg(\int_{\varepsilon-i T^{\varepsilon}}^{\varepsilon+i T^{\varepsilon}} \sum_{\substack{M \leq T^{1+\varepsilon} \\
			M\,d y a d i c}} \sum_{m \geq 1} \frac{d(m)}{m^{1 / 2-i t+w}} V_1\left(\frac{m}{M}\right)\left(\frac{ t}{2\pi}\right)^{ w} e^{- i \pi  w / 2} \frac{G(w)}{w} \mathrm{~d} w +O\left(T^{-1/ 2+\varepsilon}\right)\nonumber\\
	& +\int_{\varepsilon-i T^\varepsilon}^{\varepsilon+i T^\varepsilon} \sum_{\substack{M \leq T^{1 +\varepsilon} \\
			M\,d y a d i c}} \sum_{m \geq 1} \frac{d(m)}{m^{1 / 2+i t+w}} V_1\left(\frac{m}{M}\right)\left(\frac{t}{2 \pi e}\right)^{2i t}\left(\frac{t}{2 \pi}\right)^{ w}
	e^{i \pi( w / 2-1/2)}  \frac{G(w)}{w} \mathrm{~d} w\Bigg)\mathrm{~d} t.
	\end{align}
	For convenience, we will write \eqref{eight} as
	$$
	\begin{aligned}
	\frac{1}{(2 \pi i)^2}\int_{\mathbb{R}}V\left(\frac{t}{T}\right)\left[\left(S_1+S_2\right)\left(S_3+S_4\right)+S_5\right]\mathrm{~d} t,
	\end{aligned}
    $$
    where $S_5$ comes from the error term of \eqref{eight}. Multiplying out the summand above leads to several cross terms, which we will analyze one by one.
	\subsection{Mellin transform}
	\begin{lem}\label{Mellin}
		Suppose $F(x)\in \mathcal{C}_{c}^{\infty}(0, \infty)$ satisfies $F^{(j)}(x) \ll T^{j\varepsilon}$.  And let $\widetilde{\Phi}_{F}^{ \pm}(s)$ be a Mellin transform of $\Phi_{F}^{\pm}(x)$ as in the Voronoi summation formula. Then for $\operatorname{Re}(s)=\varepsilon$, we have
		$$
	\begin{aligned}
		\widetilde{\Phi}_{F}^{ \pm}(s)\ll \frac{T^{A\varepsilon}}{(1+|s|)^{A}}
	\end{aligned}
	$$
	for any $A\geq 1$.
	\end{lem}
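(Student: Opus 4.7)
The plan is to reduce the Mellin transform $\widetilde{\Phi}_F^{\pm}(s)$ to the Mellin transform of $F$ multiplied by an explicit ratio of gamma functions, and then to estimate each factor separately.

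First I would interchange the order of integration in the definitions of $\Phi_F^{\pm}(x)$ against $x^{s-1}\,\mathrm{d}x$. The outer $y$-integral is over the compact support of $F$, and for $\operatorname{Re}(s)=\varepsilon$ the inner Bessel integrals are absolutely convergent after the substitution $u=4\pi\sqrt{xy}$ (since $\operatorname{Re}(\nu)=\operatorname{Re}(\pm 2i\mu)=0$ and $0<2\operatorname{Re}(s)<3/2$). Applying the classical Mellin identities
\[
\int_0^\infty J_{\nu}(u)\,u^{2s-1}\,\mathrm{d}u=2^{2s-1}\,\frac{\Gamma(s+\nu/2)}{\Gamma(1-s+\nu/2)},\qquad \int_0^\infty K_{\nu}(u)\,u^{2s-1}\,\mathrm{d}u=2^{2s-2}\,\Gamma(s+\nu/2)\,\Gamma(s-\nu/2),
\]
the inner integrals collapse to $y^{-s}$ times explicit gamma-factor combinations. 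After bookkeeping, I would obtain an identity of the shape
\[
\widetilde{\Phi}_F^{\pm}(s)=H^{\pm}(s,\mu)\,\widetilde{F}(1-s),
\]
where $H^{\pm}(s,\mu)$ is a ratio/combination of gamma functions in $s\pm i\mu$ (the same gamma factors that appear in the functional equation of $L(s,f)$).

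Next, on the line $\operatorname{Re}(s)=\varepsilon$ with $\mu$ fixed, Stirling's formula controls the gamma factors: using $\Gamma(z)\Gamma(1-z)=\pi/\sin(\pi z)$ to eliminate the denominators and then applying Stirling, one gets $|H^{\pm}(s,\mu)|\ll(1+|s|)^{2\varepsilon-1}\ll 1$, so only a mild polynomial factor (at worst) appears. For the second factor, I would note that since $F\in\mathcal{C}_c^\infty(0,\infty)$ with $F^{(j)}(x)\ll T^{j\varepsilon}$ on a compact support, repeated integration by parts gives
\[
\widetilde{F}(1-s)=\int_0^\infty F(y)\,y^{-s}\,\mathrm{d}y=\frac{(-1)^j}{(s-1)(s-2)\cdots(s-j)}\int_0^\infty F^{(j)}(y)\,y^{j-s}\,\mathrm{d}y\ll\frac{T^{j\varepsilon}}{(1+|s|)^{j}},
\]
for any $j\geq 0$, with the boundary terms vanishing by compact support.

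Taking $j=A$ and combining the two estimates produces the desired bound. The main technical step is the explicit gamma-ratio computation in the first paragraph; once the identity $\widetilde{\Phi}_F^{\pm}(s)=H^{\pm}(s,\mu)\widetilde{F}(1-s)$ is in place, the Stirling and integration-by-parts bounds are routine. The only point requiring a little care is the justification of the order interchange near $x=0$, which can be handled either by cutting off the $x$-integral at some small $\delta>0$ and letting $\delta\to 0$, or by shifting the line of integration slightly and using the small-argument asymptotics of $J_\nu$ and $K_\nu$ recalled in the remark after Lemma~\ref{Phi}.
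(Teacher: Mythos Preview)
Your argument is correct and takes a genuinely different route from the paper. You factor the Mellin transform explicitly via the classical Weber--Schafheitlin and $K$-Bessel Mellin identities, obtaining $\widetilde{\Phi}_F^{\pm}(s)=H^{\pm}(s,\mu)\,\widetilde{F}(1-s)$, and then bound each factor separately by Stirling and by integration by parts on $F$. The paper instead integrates by parts directly in the Mellin integral of $\Phi_F^{\pm}$, picking up the factor $\prod_{i=1}^{j}(s+i-1)^{-1}$, and then splits the remaining $x$-integral into the three ranges $(0,1]$, $(1,T^{\varepsilon}]$, and $(T^{\varepsilon},\infty)$, estimating each range using respectively the small-argument behaviour of $J_{\pm 2i\mu}$, the asymptotic expansion from Lemma~\ref{Phi}, and the non-stationary phase bound. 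Your approach is cleaner and more structural: it reveals that the decay in $|s|$ really comes from the smoothness of $F$, while the Bessel kernel contributes only a mild factor $H^{\pm}(s,\mu)\ll(1+|s|)^{2\varepsilon-1}$ (so you in fact save one more power of $1+|s|$ than stated). The paper's approach is more self-contained, relying only on the asymptotics already quoted in Lemma~\ref{Phi} rather than on an exact Mellin identity. One small quibble: the range you give for absolute convergence of $\int_0^\infty J_\nu(u)u^{2s-1}\,du$ should be $0<2\operatorname{Re}(s)<1/2$ rather than $<3/2$; since $\operatorname{Re}(s)=\varepsilon$ is arbitrarily small this causes no problem, but it is worth stating the correct strip.
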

	\begin{proof}
The proof for $\widetilde{\Phi}_{F}^{ -}(s)$ proceeds in the same way. We thus give details only for the case $\widetilde{\Phi}_{F}^{ +}(s)$. By repeated integration by parts, we have
$$
\begin{aligned}
	\widetilde{\Phi}_{F}^{ \pm}(s)=\prod_{i=1}^{j}(s+i-1)^{-1}\int_{0}^{\infty}x^{s+j-1}(\Phi_{F}^{+})^{(j)}(x)\mathrm{~d} x.
\end{aligned}
$$
We split the integral into three ranges:
\[
\int_{0}^{\infty}=\int_{0}^{1}+\int_{1}^{T^{\varepsilon}}+\int_{T^{\varepsilon}}^{\infty},
\]
and then estimate each of them separately. 

For $x\leq 1$. Using the small-$x$ bounds for Bessel functions of purely imaginary order, we have $J_{\nu}^{(j)}(x)\ll x^{-j}$. Applying the chain rule, this implies that $\frac{d^j}{dx^j} J_\nu(4\pi\sqrt{xy})
\ll_j x^{-j}$, uniformly for $y\asymp 1$. Consequently,
$$
\begin{aligned}
	\int_{0}^{1}x^{s+j-1}(\Phi_{F}^{+})^{(j)}(x)\mathrm{~d} x\ll\int_{0}^{1}x^{\varepsilon-1}\mathrm{~d} x=\varepsilon^{-1}.
\end{aligned}
$$

For $x\ge T^{\varepsilon}$.
In this range, we use the asymptotic expansion
\begin{equation}\label{jifen}
\begin{aligned}
	\Phi_{F}^{+}(x)=x^{-1 / 4} \int_0^{\infty} F(y) y^{-1 / 4} \sum_{i=0}^J \frac{c_i e(2 \sqrt{x y})+d_i e(-2 \sqrt{x y})}{(x y)^{i / 2}} \mathrm{~d} y+O(x^{-J / 2-3 / 4}).
\end{aligned}
\end{equation}
Differentiating with respect to $x$, we see that $(\Phi_{F}^{+})^{(j)}(x)$ can be written as a finite linear combination of oscillatory integrals with phase $\pm 2\sqrt{xy}$.
Applying the Lemma \ref{6} (i) (or repeated integration by parts), we obtain that for any $A\geq1$,
\[
(\Phi_{F}^{+})^{(j)}(x)\ll x^{-A},
\quad x\ge T^{\varepsilon}.
\]
Consequently,
\[
\int_{T^{\varepsilon}}^{\infty}
x^{s+j-1}(\Phi_{F}^{+})^{(j)}(x)\,dx
\ll T^{-A},
\]
and this contribution is negligible.

\medskip
For $1<x\le T^{\varepsilon}$.
From \eqref{jifen} and the chain rule with respect to $x$, we deduce that $\frac{d^j}{dx^j} e(2\sqrt{xy})\ll_j x^{-j/2}$, And then
\[
\int_{1}^{T^{\varepsilon}} x^{s+j-1}(\Phi_{F}^{+})^{(j)}(x)\,dx
\ll T^{j\varepsilon/2}.
\]
Combining the above three ranges, we conclude that
$\widetilde{\Phi}_{F}^{+}(s)$
satisfies the desired bound.
	
	\end{proof}
	\subsection{Proof of Theorem \ref{t1}}
	
	\begin{lem} Given any $\varepsilon>0$, we have
        $$
		\begin{aligned}
		&I_1\coloneq\frac{1}{(2 \pi i)^2}\int_{\mathbb{R}}V\left(\frac{t}{T}\right)S_{1}S_{3}\mathrm{~d} t=cT\frac{L(1,f)^2}{\zeta(2
		)}+O\left(T^{1/2+\varepsilon}\right),\\
    	&I_4\coloneq\frac{1}{(2 \pi i)^2}\int_{\mathbb{R}}V\left(\frac{t}{T}\right)S_{2}S_{4}\mathrm{~d} t=cT\frac{L(1,f)^2}{\zeta(2
    		)}+O\left(T^{1/2+\varepsilon}\right),
    \end{aligned}
    $$
    where $c$ is as defined in Theorem \ref{t1}.
    \end{lem}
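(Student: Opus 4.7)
The plan is to substitute the definitions of $S_1$ and $S_3$ into $I_1$, interchange all summations and integrations, and evaluate the $t$-integral first. Writing $t=T\xi$, we obtain the oscillatory integral
\begin{equation*}
\mathcal{I}(w_1,w_2;m,n):=\int_{\mathbb{R}} V\!\left(\frac{t}{T}\right)\!\left(\frac{m}{n}\right)^{it}\!\left(\frac{t}{2\pi}\right)^{w_1+w_2}\!\mathrm{d}t = T\!\left(\frac{T}{2\pi}\right)^{\!w_1+w_2}\!\int_{\mathbb{R}} V(\xi)\,\xi^{w_1+w_2}e^{iT\xi\log(m/n)}\mathrm{d}\xi.
\end{equation*}
Repeated integration by parts in $\xi$ (or Lemma \ref{6}(i)) shows that $\mathcal{I}$ is negligible unless $|\log(m/n)|\ll T^{-1+\varepsilon}$. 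Combined with $m,n\ll T^{1+\varepsilon}$, the off-diagonal contribution is thus confined to $1\leq|m-n|\ll T^{2\varepsilon}$ with $m,n\gg T^{1-\varepsilon}$.

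The diagonal $m=n$ produces the main term. The partition of unity $\sum_N V_1(n/N)=1$ collapses the dyadic decompositions, and extending the $n$-sum to infinity introduces only negligible error. Applying the Hecke identity $\sum_{n\geq 1}\lambda_f(n)d(n)n^{-s}=L(s,f)^2/\zeta(2s)$ reduces the diagonal to the double Mellin--Barnes integral
\begin{equation*}
I_1^{\mathrm{diag}}=\frac{T}{(2\pi i)^2}\int_{(\varepsilon)}\!\int_{(\varepsilon)}\frac{L(1+w_1+w_2,f)^2}{\zeta(2+2w_1+2w_2)}\widetilde{V}(w_1+w_2)\!\left(\frac{T}{2\pi}\right)^{\!w_1+w_2}\!e^{i\pi(w_1-w_2)/2}\frac{G(w_1)G(w_2)}{w_1w_2}\mathrm{d}w_1\mathrm{d}w_2,
\end{equation*}
where $\widetilde{V}(w)=\int V(\xi)\xi^w\mathrm{d}\xi$ satisfies $\widetilde{V}(0)=c$. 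Shifting both contours past the simple poles at $w_1=w_2=0$ to $\operatorname{Re}(w_j)=-1/4$, the double residue at the origin contributes the main term $cTL(1,f)^2/\zeta(2)$, while the remaining integral is bounded by the convexity estimate $L(1/2+it,f)\ll(1+|t|)^{1/2+\varepsilon}$, the standard bound $1/\zeta(1+it)\ll\log|t|$, and the Gaussian decay of $G(w)=e^{w^2}$, producing an error of $O(T^{1/2+\varepsilon})$.

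The main obstacle is the off-diagonal $m\neq n$, since a direct Cauchy--Schwarz estimate delivers only $O(T^{1+\varepsilon})$ and genuine cancellation is required. Parametrising $h=m-n$ with $1\leq|h|\ll T^{2\varepsilon}$, I would apply Voronoi summation (Lemma \ref{Vor}) to the $\lambda_f(n)$-sum, using Lemma \ref{Phi} to control the Bessel kernels and the stationary-phase Lemma \ref{6} on the resulting oscillatory integrals. A Poisson completion in one of the remaining variables via Lemma \ref{8} then introduces Kloosterman sums, to which the Deshouillers--Iwaniec large sieve (Lemma \ref{9}) can be applied to extract square-root cancellation; Lemma \ref{7} furnishes mean-square estimates for the accompanying Dirichlet polynomials. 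This pipeline is designed to deliver the required $O(T^{1/2+\varepsilon})$ bound for the off-diagonal.

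For $I_4$, the factors $(t/2\pi e)^{\pm 2it}$ in $S_2$ and $S_4$ cancel, and the remaining oscillation is $(n/m)^{it}$ in place of $(m/n)^{it}$. Hence $I_4$ is symmetric to $I_1$ under the swap $m\leftrightarrow n$, and the identical analysis yields the same main term $cTL(1,f)^2/\zeta(2)$ with error $O(T^{1/2+\varepsilon})$.
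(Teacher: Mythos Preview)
Your treatment of the diagonal $m=n$ is correct and matches the paper's argument: collapse the dyadic decomposition, recognise $\sum_n \lambda_f(n)d(n)n^{-s}=L(s,f)^2/\zeta(2s)$, shift contours past $w_1=w_2=0$, and bound the shifted integrals.

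The off-diagonal is where your plan diverges from the paper and, as written, has a genuine gap. Parametrising $h=m-n$ and then applying Voronoi to the $\lambda_f(n)$-sum does not by itself produce the additive character $e(an/q)$ with nontrivial modulus that Voronoi needs in order to be useful; with modulus $q=1$ the dual sum carries no arithmetic structure, and there is then nothing of the shape $\sum_{(k,r)=1}e(n\bar k/r)W(k/K)$ for Lemma~\ref{8} to act on. In short, your described pipeline never generates the Kloosterman sums that Lemma~\ref{9} is supposed to bound.

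The paper's route is different in a crucial way: it first opens the divisor function, writing $m=ab$ and localising $a\asymp A$, $b\asymp B$. Poisson summation in $a$ followed by stationary phase in the $\xi$-integral produces the phase $e(-an/b)$, i.e.\ an additive character to modulus $b$. Only now is Voronoi applied to the $n$-sum with modulus $b$, yielding $e(\mp\bar a n/b)$ on the dual side; Lemma~\ref{8} (Poisson in the new $a$-variable) then converts this into genuine Kloosterman sums $S(a,n;b)$, and Lemma~\ref{9} finishes. The step you are missing is the decomposition $d(m)=\sum_{ab=m}1$, which is what creates the bilinear structure and the modulus needed for the whole Voronoi/Kloosterman machinery to engage. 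Your $h$-parametrisation would instead lead to the shifted convolution $\sum_n\lambda_f(n)d(n+h)$, which can be handled, but by rather different means (delta method or spectral expansion) than the ones you list.
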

	\begin{proof} 
		Since $S_1S_3$ and $S_2S_4$ are symmetric, it suffices to prove the case of $S_1S_3$. We can exchange the order of integral and summation, and rewrite $I_1$ as
		$$
		\begin{aligned}
			\frac{1}{(2\pi i)^2}&\int_{\varepsilon-iT^\varepsilon}^{\varepsilon+iT^\varepsilon}\int_{\varepsilon-iT^\varepsilon}^{\varepsilon+iT^\varepsilon}\sum_{\substack{N \leq T^{1+\varepsilon} \\
					N\,d y a d i c}} \sum_{n \geq 1}  \frac{\lambda_{f}(n)}{n^{1/2+u}}V_1\left(\frac{n}{N}\right)\sum_{\substack{M \leq T^{1+\varepsilon} \\
					M\,d y a d i c}} \sum_{m\geq 1} \frac{d(m)}{m^{1/2+v}}V_1\left(\frac{m}{M}\right)\\&\times{\left(\frac{1}{2\pi}\right)}^{u+v}e^{\frac{i\pi(u-v)}{2}}\frac{G(u)G(v)}{uv}\int_{\mathbb{R}}V\left(\frac{t}{T }\right)t^{u+v}e^{it\log\frac{m}{n}}\mathrm{~d} t\mathrm{~d} u\mathrm{~d} v.
		\end{aligned}
		$$
		We now focus on the $t$-integral. It is evident that the main contribution to $I_1$ comes from the terms where $m=n$. To proceed, we make the change of variable $t=T\xi$, yielding
		\begin{equation}\label{main}
			\begin{aligned}
				\frac{T}{(2\pi i)^2}&\int_{\varepsilon-iT^\varepsilon}^{\varepsilon+iT^\varepsilon}\int_{\varepsilon-iT^\varepsilon}^{\varepsilon+iT^\varepsilon}\int_{\mathbb{R}}\sum_{\substack{N \leq T^{1+\varepsilon} \\
						N\,d y a d i c}} \sum_{n \geq 1}  \frac{\lambda_{f}(n)d(n)}{n^{1+u+v}}V_1\left(\frac{n}{N}\right)V(\xi)  {\left(\frac{T\xi}{2\pi}\right)}^{u+v}\\&\times e^{\frac{i\pi(u-v)}{2}}\frac{G(u)G(v)}{uv}\mathrm{~d} \xi\mathrm{~d} u\mathrm{~d} v.
			\end{aligned}
		\end{equation}
		We now remove the truncation on the imaginary parts of the integrals, incurring only a negligible error. The assumption $N \leq T^{1+\varepsilon}$, imposed at the beginning of the proof, can be dropped at the cost of an admissible error term $O(T^{-2025})$. Consequently, the $n$-sum inside the integral collapses to $\frac{L(1+u+v,f)^{2}}{\zeta(2(1+u+v))}$.
		We then shift the contours of integration to $\operatorname{Re}(u)=\operatorname{Re}(v)=-1/4+\varepsilon$. In doing so, we encounter a simple pole at $u=v=0$, whose residue contributes $cT\frac{L(1,f)^{2}}{\zeta(2)}$. The remaining integrals along the shifted contours can be bounded in a standard way by $T^{1/2+\varepsilon}$.
		
		For $m\neq n$ in $I_1$, we note that the following estimate holds for $I_1$:
		$$
		\begin{aligned}
		I_1 \ll T^{\varepsilon} \sup _{u \in\left[\varepsilon-i T^{\varepsilon}, \varepsilon+i T^{\varepsilon}\right]} \sup _{\substack{N \leqslant T^{1+\varepsilon} \\ M \leqslant T^{1+\varepsilon}}}\left|I_a\right|,
		\end{aligned}
		$$
where 
$$
\begin{aligned}
I_a=\int_{\mathbb{R}} V\left(\frac{t}{T}\right) \sum_{n \geqslant 1} \frac{\lambda_f(n)}{n^{1 / 2+i t+u}} V_1\left(\frac{n}{N}\right) \sum_{m \geqslant 1} \frac{d(m)}{m^{1 / 2-i t+u}} V_1\left(\frac{m}{M}\right) t^{2 u} \mathrm{~d} t .
\end{aligned}
$$
Now for $I_a$ let $t=T \xi$. Then merge $t^{2 u}$ into $V$ to get $V_2$, thus we have

$$
I_a=T^{1+2 \varepsilon} \sum_{n \geqslant 1} \frac{\lambda_f(n)}{n^{1 / 2+u}} V_1\left(\frac{n}{N}\right) \sum_{m \geqslant 1} \frac{d(m)}{m^{1 / 2+u}} V_1\left(\frac{m}{M}\right) \int_{\mathbb{R}} \frac{V_2(\xi)}{T^{2 \varepsilon}} e^{i T \xi \log \frac{m}{n}} \mathrm{~d} \xi .
$$
\textbf{Case 1:} $M \ll T^{1-\varepsilon}$. Suppose $h_1(\xi)=T \xi \log \frac{m}{n}$, so that

$$
\begin{aligned}
	h_{1}^{\prime}(\xi) & =T \log \left(\frac{m}{n}\right) \\
	& \gg T \min \left\{\frac{|n-m|}{m}, 1\right\} \\
	& \gg T^{\varepsilon}, \\
	h_{1}^{(j)}(\xi) & =0 \ll T^{\varepsilon} \quad(j \geqslant 2) .
\end{aligned}
$$
Hence the integral in $I_a$ satisfies condition (i) in Lemma \ref{6}, where $Z=1$, $Y=T^{\varepsilon}$ and $X=\Delta$. Thus we have

$$
\int_{\mathbb{R}} \frac{V_2(\xi)}{T^{2 \varepsilon}} e^{i \xi T \log \frac{n}{m}} \mathrm{~d} \xi \ll T^{-2025}.
$$
Furthermore we have $I_a=O(T^{-2025})$.\\
\textbf{Case 2:} $T^{1-\varepsilon}\ll M\ll T^{1+\varepsilon}$. For $N\ll T^{1-2\varepsilon}$, applying Lemma \ref{6} (i) with $Z=1$, $Y=T\log T$ and $X=\Delta$, we obtain $I_a\ll T^{-2025}$. It remains to consider the complementary range $N\gg T^{1-\varepsilon}$.

We now write $m=ab$, and apply a smooth partition of unity to the variables $a$ and $b$, to see that our sum is now at most $\log^2 T$ sums of the form 
$$
\begin{aligned}
T\sumthree_{a,b,n}\frac{\lambda_{f}(n)}{(abn)^{1/2+u}}V_1\left(\frac{a}{A}\right)V_1\left(\frac{b}{B}\right)V_1\left(\frac{ab}{M}\right)V_1\left(\frac{n}{N}\right)\int_{\mathbb{R}} V_{2}(\xi)e^{i T \xi \log \frac{ab}{n}} \mathrm{~d} \xi ,
\end{aligned}
$$
where $AB\asymp M$. Without loss of generality we may assume that $A\geq B$. Now we remove the weight function $V_1\left(\frac{ab}{M}\right)$ by a Mellin inversion, getting
$$
\begin{aligned}
	T\sumthree_{a,b,n}\frac{\lambda_{f}(n)}{(abn)^{1/2+u}}V_1\left(\frac{a}{A}\right)V_1\left(\frac{b}{B}\right)V_1\left(\frac{n}{N}\right)\int_{(\varepsilon)}\widetilde {V_{1}}(s)\left(\frac{ab}{M}\right)^{-s}\int_{\mathbb{R}} V_{2}(\xi)e^{i T \xi \log \frac{ab}{n}} \mathrm{~d} \xi\mathrm{~d} s.
\end{aligned}
$$
By integrating by parts several times, we see that it suffices to bound 
$$
\begin{aligned}
	T\sumthree_{a,b,n}\frac{\lambda_{f}(n)}{(ab)^{1/2+u+s}n^{1/2+u}}V_1\left(\frac{a}{A}\right)V_1\left(\frac{b}{B}\right)V_1\left(\frac{n}{N}\right)\int_{\mathbb{R}} V_{2}(\xi)e^{i T \xi \log \frac{ab}{n}} \mathrm{~d} \xi,
\end{aligned}
$$
for $|s|\ll T^{\varepsilon}$ and $\operatorname{Re}(s)=\varepsilon$. We will further allow ourselves to rewrite the above as
$$
\begin{aligned}
	T\sumthree_{a,b,n}\frac{\lambda_{f}(n)}{(abn)^{1/2}}V_1\left(\frac{a}{A}\right)V_1\left(\frac{b}{B}\right)V_1\left(\frac{n}{N}\right)\int_{\mathbb{R}} V_{2}(\xi)e^{i T \xi \log \frac{ab}{n}} \mathrm{~d} \xi,
\end{aligned}
$$
for slightly different functions $V_1$. We may absorb a factor of $\frac{\sqrt{ABN}}{\sqrt{abn}}$ into the smooth functions $V_1$ without affecting their essential properties. Consequently, it suffices to consider the expression
\begin{equation}
\begin{aligned}
	\frac{T}{\sqrt{ABN}}\sumthree_{a,b,n}\lambda_{f}(n)V_1\left(\frac{a}{A}\right)V_1\left(\frac{b}{B}\right)V_1\left(\frac{n}{N}\right)\int_{\mathbb{R}} V_{2}(\xi)e^{i T \xi \log \frac{ab}{n}} \mathrm{~d} \xi,
\end{aligned}
\end{equation}

We first deal with the sum over $a$. Making a change of variable and applying Poisson summation, we obtain that the $a$-sum is equal to
\begin{equation}
\begin{aligned}\label{a-sum}
	A\sum_{a}\int_{\mathbb{R}}V_{1}(x)e^{i (T \xi \log \frac{Abx}{n}-2\pi Aax)}\mathrm{~d} x.
\end{aligned}
\end{equation}
Suppose
$$
\begin{aligned}
h_2(x)=T \xi \log \frac{Abx}{n}-2\pi Aax.
\end{aligned}
$$
Then we have
$$
\begin{aligned}
	h_2^{\prime}(x) & = \frac{T\xi}{x}-2\pi Aa ,\\
	h_2^{\prime \prime}(x) & = -\frac{T\xi}{x^2}, \quad h_2^{(j)}(v) \asymp_j T \, (j \geq 2) .
\end{aligned}
$$
The solution of $h^{\prime}(x)=0$ is $x_{0}=\frac{T\xi}{2\pi Aa}$. Note that $h_{2}(x_0)=T\xi\log\frac{bT\xi}{2\pi ane}$. By the stationary phase method (see Lemma \ref{6} (ii)), we get
$$
\begin{aligned}
	\eqref{a-sum}=\frac{A}{\sqrt{T}}\sum_{a}e\left(\frac{T\xi}{2\pi}\log\frac{bT\xi}{2\pi ane}\right)W_{1}\left(\frac{T\xi}{Aa}\right)+O(T^{-2025}),
\end{aligned}
$$
where $W_1(x)$ is a smooth function supported on $x\asymp 1$ and $W_{1}^{(i)}(x)\ll 1$ for $i\geq 0$. Hence we only need to consider $a\asymp T/A$, otherwise the contribution is negligibly small. 
 
We apply a smooth partition of unity in $a$, introducing $V_{1}\left(\frac{a}{A_0}\right)$. This yields at most $O(\log T)$ sums of the form
$$
\begin{aligned}
\frac{A}{\sqrt{T}}\sum_{a}e\left(\frac{T\xi}{2\pi}\log\frac{bT\xi}{2\pi ane}\right)W_{1}\left(\frac{T\xi}{Aa}\right)V_{1}\left(\frac{a}{A_0}\right).
\end{aligned}
$$
The weight function $W_1$ implies that $A_0\asymp T/A$. We may again remove the factor $W_{1}\left(\frac{T\xi}{Aa}\right)$ by Mellin inversion, exactly as in the treatment of $V_{1}\left(\frac{ab}{M}\right)$. We now turn to the $\xi$-integral. Let
$$
\begin{aligned}
	h_3(\xi)=T \xi \log \frac{bT\xi}{2\pi ane}.
\end{aligned}
$$
Then we have
$$
\begin{aligned}
	h_3^{\prime}(\xi) & = T\log\frac{bT\xi}{2\pi an},\\
	h_3^{\prime \prime}(\xi) & = \frac{T}{\xi}, \quad h_2^{(j)}(v) \asymp_j T, \, (j \geq 2) .
\end{aligned}
$$
The solution of $h_{3}^{\prime}(\xi)=0$ is $\xi_{0}=\frac{2\pi an}{bT}$. Note that $h_{3}(\xi_0)=-\frac{2\pi an}{b}$. By Lemma \ref{6} (ii) for the $\xi$-integral, we obtain
$$
\xi-\text { integral }=\frac{e\left(-\frac{an}{b}\right)}{\sqrt{T}} W_2\left(\frac{an}{bT}\right)+O\left(T^{-2025}\right),
$$
where $W_2(\xi)$ is a smooth function supported on $[1, 2]$ and satisfies $W_{2}^{(i)}(\xi)\ll \Delta^i$ for $i\geq 0$. Applying Mellin inversion once more, we remove the weight function $W_2$. Writing $d=(a,b)$, we decompose $a=da'$ and $b=db'$ with $(a',b')=1$. For a fixed $d$, we denote by
$$
	\begin{aligned}
		\mathcal{S}(d)\coloneq\sumthree_{\substack{a,b,n\\(a,b)=1}}\lambda_{f}(n)e\left(-\frac{an}{b}\right)V_1\left(\frac{daA}{T}\right)V_1\left(\frac{db}{B}\right)V_1\left(\frac{n}{N}\right).
	\end{aligned}
$$
It now remains to examine
\begin{equation}\label{key2}
	\begin{aligned}
		{\sqrt{\frac{A}{BN}}}
		\sum_{d\ll T^{1+\varepsilon}} \mathcal{S}(d).
	\end{aligned}
\end{equation}

Applying Voronoi summation, we now transform the $n$-sum in \eqref{key2} into
\begin{equation}\label{key2-n}
	\begin{aligned}
		\frac{Nd}{B}\sum_{ \pm} \sum_{n=1}^{\infty} \lambda_f(n) e\left(\mp \frac{\overline{a}n }{b}\right) \Phi_{V_1}^{ \pm}\left(\frac{n N}{b^2}\right).
	\end{aligned}
\end{equation}
We split the analysis into two cases. We first assume that $nN/b^2\gg T^{\varepsilon}$. By Lemma \ref{Phi}, the contribution from $\Phi_{V_1}^{-}\left(\frac{n N}{b^2}\right)$ is negligible. Therefore, it remains to consider $\Phi_{V_1}^{+}(x)$, which is given by
$$
\begin{aligned}
	\Phi_{V_1}^{+}(x)=x^{-1 / 4} \int_0^{\infty} V_1(y) y^{-1 / 4} \sum_{j=0}^J \frac{c_j e(2 \sqrt{x y})+d_j e(-2 \sqrt{x y})}{(x y)^{j / 2}} \mathrm{~d} y+O(x^{-J / 2-3 / 4}).
\end{aligned}
$$
Note that $4\sqrt{xy}^{(i)}(y)\asymp \sqrt{x}$ for $y\asymp 1$. Hence by Lemma~\ref{6} (i), we have $\Phi_{V_1}^{+}\ll T^{-2025}$. It is also worth noting that $B\gg T^{1/2-\varepsilon}$. We now assume instead that $nN/b^2\ll T^{\varepsilon}$. Under this assumption, we may remove $\Phi_{V_1}^{ \pm}$ by Mellin inversion, and obtain
$$
\begin{aligned}
	\Phi_{V_1}^{ \pm}\left(\frac{nN}{b^2}\right)=\frac{1}{2\pi i}\int_{(\varepsilon)}\widetilde{\Phi}_{V_1}^{ \pm}(s)\left(\frac{nN}{b^2}\right)^{-s}\mathrm{~d} s.
\end{aligned}
$$
In view of Lemma  \ref{Mellin}, the estimate for \eqref{key2-n} is equivalent to that for 
$$
\begin{aligned}
	\frac{Nd}{B}\sum_{ \pm} \sum_{nN/b^2\ll T^{\varepsilon}} \frac{\lambda_f(n)}{n^s} b^{2s}e\left(\mp \frac{\overline{a}n }{b}\right) 
\end{aligned}
$$
for $|s|\ll T^{\varepsilon}$ and $\operatorname{Re}(s)=\varepsilon$. We now break the $n$-sum using a smooth partition of unity and insert the above expression into \eqref{key2}. Since $\operatorname{Im}(s)\ll T^{\varepsilon}$, the factors involving $s$ can be absorbed into the smooth weight functions. Consequently, we have
$$
\begin{aligned}
		\mathcal{S}(d)\ll \frac{T^{\varepsilon}Nd}{B}\sup_{N_0}\left|\sumthree_{a,b,n}\lambda_{f}(n)e\left(\mp\frac{\overline{a}n}{b}\right)V_1\left(\frac{daA}{T}\right)V_1\left(\frac{db}{B}\right)V_1\left(\frac{n}{N_0}\right)\right|,
\end{aligned}
$$
where $N_0\ll \frac{T^{\varepsilon} b^{2}}{N}\asymp \frac{T^{\varepsilon}B^2}{d^{2}N}\ll T^{\varepsilon}/d^2$.

We now proceed to bound the above sum. Lemma \ref{8} yields
$$
\begin{aligned}
	\sum_{a}e\left(\mp\frac{\overline{a}n}{b}\right)V_1\left(\frac{daA}{T}\right)=\frac{T}{Adb}\sum_{a}S(a, n; b)\widehat{V_1}\left(\frac{a T}{Adb}\right).
\end{aligned}
$$
By repeated integration by parts, we have $\widehat{V_1}(x)\ll T^{-2025}$ for $x\gg T^{\varepsilon}$. This allows us to truncate the $a$-sum at $a\ll AB/T^{1-\varepsilon}$. In what follows, we remove $\widehat{V_1}$ by Mellin inversion. We now write $\widetilde{\widehat{V_1}}(s)$ as the Mellin transform of $\widehat{V_1}$. Using the decay properties of $\widehat{V_1}$ and repeated integration by parts, we have
$$
\begin{aligned}
\widetilde{\widehat{V_1}}(s)\ll\frac{T^{\varepsilon}}{|s|^{j}}
\end{aligned}
$$
for $\operatorname{Re}(s)=\varepsilon$ and any $j\geq 1$. In fact, we can truncate $s$-integral at $|s|\ll T^{\varepsilon}$ with a negligible error term. Applying a smooth partition of unity, we arrive at
$$
	\begin{aligned}
		\mathcal{S}(d)\ll\frac{T^{1+\varepsilon}Nd}{AB^2}\left|\sumthree_{a,b,n}\lambda_{f}(n)S(a,n;b)V_{1}\left(\frac{a}{A_1}\right)V_1\left(\frac{db}{B}\right)V_1\left(\frac{n}{N_0}\right)\right|,
	\end{aligned}
$$
where $A_1\ll AB/T^{1-\varepsilon}\ll T^{\varepsilon}$. Finally applying the Lemma \ref{9} we have that
$$
\begin{aligned}
	\mathcal{S}(d)\ll\frac{1}{d^{1+\varepsilon}}\cdot\frac{NT^{1+\varepsilon}}{AB}.
\end{aligned}
$$
Inserting this estimate back into \eqref{key2} and summing over $d$, we deduce that $\eqref{key2}\ll T^{1/2+\varepsilon}$.

Having said all of above, this completes the proof.
	\end{proof}

	\begin{lem} Given any $\varepsilon>0$, we have
	$$
	\begin{aligned}
		&I_2\coloneq\frac{1}{(2 \pi i)^2}\int_{\mathbb{R}}V\left(\frac{t}{T}\right)S_{1}S_{4}\mathrm{~d} t=O\left(T^{1/2+\varepsilon}\right),\\
		&I_3\coloneq\frac{1}{(2 \pi i)^2}\int_{\mathbb{R}}V\left(\frac{t}{T}\right)S_{2}S_{3}\mathrm{~d} t=O\left(T^{1/2+\varepsilon}\right).
	\end{aligned}
	$$
	\end{lem}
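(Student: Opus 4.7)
The plan is to parallel the off-diagonal analysis from the preceding lemma, the twist being that the factor $(t/2\pi e)^{2it}$ appearing in $S_4$ now produces a non-trivial stationary phase in $t$. By taking complex conjugates and using $\overline{\lambda_f(n)} = \lambda_f(n)$ in the Maass case, $I_3$ is the complex conjugate of $I_2$ up to harmless constants, so it suffices to bound $I_2$.

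First I will expand $I_2$ via the approximate functional equations \eqref{huangapp} and \eqref{zetaapp1}, interchange summation and contour integrals, and extract the inner $t$-integral. Under the substitution $t = T\xi$, the $\xi$-integral has phase
\begin{equation*}
h(\xi) = 2T\xi\log(T\xi/2\pi) - 2T\xi - T\xi\log(nm),
\end{equation*}
which has a unique stationary point $\xi_0 = 2\pi\sqrt{nm}/T$, with $h(\xi_0) = -4\pi\sqrt{nm}$ and $h''(\xi_0) = T^2/(\pi\sqrt{nm})$. Requiring $\xi_0 \in [1,2]$ forces $nm \asymp T^2/(2\pi)^2$, so combined with $N, M \leq T^{1+\varepsilon}$ both dyadic parameters satisfy $N, M \asymp T$. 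Applying Lemma~\ref{6}\,(ii), the $\xi$-integral equals $\frac{C(nm)^{1/4}}{T}\,e^{-4\pi i\sqrt{nm}}\,W(2\pi\sqrt{nm}/T) + O(T^{-A})$ for some smooth $W$ supported on $[1,2]$.

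Substituting back, the bound $I_2 \ll T^{1/2+\varepsilon}$ will follow from bounding
\begin{equation*}
\mathcal{T}(N,M) = T^{-1/2}\sum_{n,m} \lambda_f(n)\, d(m)\, V_1(n/N)\, V_1(m/M)\, W\!\left(\tfrac{2\pi\sqrt{nm}}{T}\right) e^{-4\pi i\sqrt{nm}}
\end{equation*}
by $T^{1/2+\varepsilon}$ for each dyadic pair with $NM \asymp T^2$. At this point the surviving sum has the same shape as the off-diagonal sum treated in the proof of the previous lemma, and the remainder of the argument proceeds along the same Poisson-Voronoi-Kloosterman pipeline: factor $m = ab$ with $a \asymp A$, $b \asymp B$, $AB \asymp M$; apply Poisson summation in $a$ followed by a second stationary phase to produce a phase of the shape $e(-\bar{a}n/b)$ with a benign smooth weight; apply Voronoi summation (Lemma~\ref{Vor}) to the $\lambda_f(n)$-sum, discarding the $\Phi^-$ contribution via Lemma~\ref{Phi} and stripping the weight from the $\Phi^+$ contribution by Mellin inversion using Lemma~\ref{Mellin}; invoke Lemma~\ref{8} to introduce Kloosterman sums in the Poisson-dual $a$-variable; and finally apply Lemma~\ref{9} to extract square-root cancellation. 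Summation over the dyadic parameters and over $d = (a,b)$ then yields $\mathcal{T}(N,M) \ll T^{1/2+\varepsilon}$, completing the estimate for both $I_2$ and (via symmetry) $I_3$.

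The main obstacle is the initial stationary-phase step. Unlike for $I_1$, where the $t$-phase $e^{it\log(m/n)}$ concentrated the stationary locus at the single point $m = n$ and produced a clean main term plus off-diagonal error, here the stationary condition $nm = T^2/(2\pi)^2$ carves out an entire hyperbolic arc of $(n,m)$-pairs. There is therefore no discrete diagonal contribution; every surviving term is off-diagonal, and the full square-root cancellation must be extracted from the $e^{-4\pi i\sqrt{nm}}$-sum by the arithmetic Poisson-Voronoi-Kloosterman machinery. Once the stationary-phase reduction is in hand, however, the subsequent bookkeeping mirrors the $I_1$ off-diagonal treatment essentially line by line, and Lemma~\ref{Mellin} ensures that each weight removal along the way is harmless.
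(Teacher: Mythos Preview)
Your proposal is correct and follows essentially the same route as the paper: stationary phase in $t$ forces $MN\asymp T^2$ and produces the phase $e(-2\sqrt{mn})$, after which factoring $m=ab$, Poisson in $a$ with a further stationary phase, and then the Voronoi--Lemma~\ref{8}--Lemma~\ref{9} pipeline from the $I_1$ off-diagonal analysis finishes the estimate (the paper in fact stops at this point, writing ``The proof is identical to that of the above lemma and is therefore omitted''). One small correction: the phase emerging from Poisson in $a$ plus stationary phase is $e(bn/a)$ with $a$ the dual variable, not $e(-\bar a n/b)$ --- the inverse $\bar a$ only appears after the subsequent Voronoi step --- but this does not affect the structure of your argument.
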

	\begin{proof}
		We only give the proof of The $I_2$, and the same is true for $I_3$. We note that $I_2$ has the following estimate:
		
		$$
		I_2 \ll T^{\varepsilon} \sup _{u \in\left[\varepsilon-i T^{\varepsilon}, \varepsilon+i T^{\varepsilon}\right]} \sup _{\substack{N \leqslant T^{1+\varepsilon} \\ M \leqslant T^{1+\varepsilon}}}\left|I_b\right|,
		$$
		where
		
		$$
		I_b=\int_{\mathbb{R}} V\left(\frac{t}{T}\right) \sum_{n \geqslant 1} \frac{\lambda_f(n)}{n^{1 / 2+i t+u}} V_1\left(\frac{n}{N}\right) \sum_{m \geqslant 1} \frac{d(m)}{m^{1 / 2+i t+u}} V_1\left(\frac{m}{M}\right)\left(\frac{t}{2 \pi e}\right)^{2 i t} t^{2 u} \mathrm{~d} t
		$$
		Let $t=T \xi$. Then merge merge $t^{2 u}$ into $V$ to get $V_2$, thus we have
		
		$$
		I_b=T \sum_{n \geqslant 1} \frac{\lambda_f(n)}{n^{1 / 2+u}} V_1\left(\frac{n}{N}\right) \sum_{m \geqslant 1} \frac{d(m)}{m^{1 / 2+u}} V_1\left(\frac{m}{M}\right) \int_{\mathbb{R}} V_2(\xi) e^{i T \xi(2\log\frac{T\xi}{2\pi e}-\log mn)} \mathrm{d} \xi .
		$$
		Suppose 
		$$
		\begin{aligned}
		h_4(\xi)=2T\xi\log\frac{T\xi}{2\pi e\sqrt{mn}}.
		\end{aligned}
		$$
		Then we have
		$$
		\begin{gathered}
			h_{4}^{\prime}(\xi)=2 T \log\frac{T\xi}{2\pi\sqrt{mn}},\\
			h_{4}^{\prime \prime}(\xi)=2 \frac{T}{\xi}, \quad h_{4}^{(j)}(\xi)\asymp T\,(j \geqslant 2).
		\end{gathered}
		$$
		By repeated integration by parts we obtain $I_b\ll T^{-2025}$ unless $MN\asymp T^2$ which we assume now. The solution of $h_{4}^{\prime}(\xi_0)=0$ is $\xi_0=\frac{2\pi\sqrt{mn}}{T}$. Note that $h_4(\xi_0)=-4\pi\sqrt{mn}$. By Lemma \ref{6} (ii) for the $\xi$-integral, we obtain
		
		$$
		I_{b}=\sum_{m \geqslant 1} \frac{d(m)}{m^{1 / 4+u}} V_1\left(\frac{m}{M}\right) \sum_{n \geqslant 1} \frac{\lambda_f(n)}{n^{1 / 4+u}} V_1\left(\frac{n}{N}\right) e(-2\sqrt{mn}) W_2\left(\frac{2 \pi \sqrt{m n}}{T}\right),
		$$
		where $W_2(\xi)$ is a smooth function supported on $[1, 2]$ and $W_{2}^{(i)}(\xi)\ll \Delta^i$ for $i\geq 0$. We now write $m=ab$, and apply a smooth partition of unity to $a$ and $b$. As before, this shows that $I_b$ is bounded by $O(\log^2 T)$ sums of the form 
		$$
		\sumthree_{a,b,n} \frac{\lambda_f(n)}{(abn)^{1 / 4+u}} V_1\left(\frac{a}{A}\right) V_1\left(\frac{b}{B}\right)V_1\left(\frac{ab}{M}\right)V_1\left(\frac{n}{N}\right) e(-2\sqrt{abn}) W_2\left(\frac{2 \pi \sqrt{ab n}}{T}\right),
		$$
		where $AB\asymp M$. We may again neglect the factor $V_1\left(\frac{ab}{M}\right)$ and $W_2\left(\frac{2\pi\sqrt{abn}}{T}\right)$, and absorb a factor of $\frac{(ABN)^{1/4}}{(abn)^{1/4}}$ into the smooth functions $V_1$. Thus we will estimate
		$$
		\begin{aligned}
	T^{-1/2}\sumthree_{a,b,n} \lambda_{f}(n)e(-2\sqrt{abn})V_1\left(\frac{a}{A}\right) V_1\left(\frac{b}{B}\right)V_1\left(\frac{n}{N}\right).
		\end{aligned}
		$$
		
		Now we consider the sum over $a$. Making a change of variable and applying Poisson summation, we obtain that the $a$-sum is equal to
		\begin{equation}
			\begin{aligned}\label{a-sum2}
				A\sum_{a}\int_{\mathbb{R}}V_{1}(x)e(-2\sqrt{Abnx}-Aax)\mathrm{~d} x.
			\end{aligned}
		\end{equation}
		Suppose
		$$
		\begin{aligned}
			h_5(x)=-4\pi \sqrt{Abnx}-2\pi Aax.
		\end{aligned}
		$$
		Then we have
		$$
		\begin{aligned}
			h_5^{\prime}(x) & = -2\pi\sqrt{\frac{Abn}{x}}-2\pi Aa, \\
			h_5^{\prime \prime}(x) & = \pi\sqrt\frac{Abn}{x^3}, \quad h_5^{(j)}(v) \asymp_j T \, (j \geq 2) .
		\end{aligned}
		$$
		The solution of $h_{5}^{\prime}(x)=0$ is $x_{0}=\frac{bn}{Aa^2}$. Note that $h_{5}(x_0)=\frac{2\pi bn}{a}$. By the stationary phase method (see Lemma \ref{6} (ii)), we get
		$$
		\begin{aligned}
			\eqref{a-sum2}=\frac{A}{\sqrt{T}}\sum_{a}e\left(\frac{bn}{a}\right)W_{1}\left(\frac{bn}{Aa^2}\right)+O(T^{-2025}),
		\end{aligned}
		$$
		where $W_1(x)$ is a smooth function supported on $x\asymp 1$ and $W_{1}^{(i)}(x)\ll 1$ for $i\geq 0$. Hence we only need to consider $Aa^2\asymp BN$, otherwise the contribution is negligibly small. Inserting this estimate back into the previous expression, we are led to consider
		\begin{equation}
			\frac{A}{T}\sumthree_{a,b,n} \lambda_{f}(n)e\left(\frac{bn}{a}\right)W_1\left(\frac{bn}{Aa^2}\right) V_1\left(\frac{b}{B}\right)V_1\left(\frac{n}{N}\right).
		\end{equation}
		
		The above sum can be treated in the same manner as \eqref{key2} in the previous lemma. Since no new difficulty arises, we omit the details and obtain the same bound.
	\end{proof}

	\begin{lem} Given any $\varepsilon>0$, we have
	$$
	\begin{aligned}
			R\coloneq\frac{1}{(2 \pi i)^2}\int_{\mathbb{R}}V\left(\frac{t}{T}\right)S_{5}\mathrm{~d} t=O(T^{1/2+\varepsilon}).
	\end{aligned}
	$$
	\end{lem}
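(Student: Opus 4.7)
The plan is to estimate $R$ by Cauchy--Schwarz combined with the standard mean-value inequality for Dirichlet polynomials (Lemma~\ref{7}). Recall that $S_5$ collects all the cross terms in the product that involve at least one of the two error terms $O(T^{-1/2+\varepsilon})$ appearing in \eqref{huangapp} (with $d=2$) and \eqref{zetaapp1}. Thus $S_5$ decomposes into a finite number of pieces of the form $S_i \cdot O(T^{-1/2+\varepsilon})$ for $i=1,\ldots,4$, together with an error-times-error piece of size $O(T^{-1+\varepsilon})$. Integrating the last piece against $V(t/T)$ trivially gives $O(T^{\varepsilon})$, so the task reduces to bounding each cross integral $\int_{\mathbb R} V(t/T)\, S_i(t)\cdot O(T^{-1/2+\varepsilon})\, dt$ by $T^{1/2+\varepsilon}$.

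I first interchange the $w$-contour and the $t$-integral. Since the $w$-integration runs over the short segment $|\operatorname{Im}(w)|\le T^{\varepsilon}$ with $\operatorname{Re}(w)=\varepsilon$ and carries the rapidly decaying factor $G(w)/w$, it contributes at most a factor $T^{\varepsilon}$ at the end; for each fixed $w$ the inner object is a Dirichlet polynomial of length at most $T^{1+\varepsilon}$ times a unimodular factor (e.g.\ $(t/(2\pi e))^{\pm 2it}$ in $S_2$ and $S_4$). Because these unimodular factors do not affect the $L^2$-norm over $t\asymp T$, Lemma~\ref{7} gives
\[
\int_{\mathbb R} V\!\left(\tfrac{t}{T}\right)|S_i(t)|^2\,dt \;\ll\; T^{\varepsilon}\,(T+T^{1+\varepsilon})\sup_{N\le T^{1+\varepsilon}}\sum_{n\asymp N}\frac{|a_n|^2}{n},
\]
where $a_n=\lambda_f(n)$ for $i=1,2$ and $a_n=d(n)$ for $i=3,4$. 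Using Lemma~\ref{3} (in the form $\sum_{n\le x}|\lambda_f(n)|^2/n\ll\log x$) and the classical bound $\sum_{n\le x}d(n)^2/n\ll \log^4 x$, I conclude
\[
\int_{\mathbb R} V\!\left(\tfrac{t}{T}\right)|S_i(t)|^2\,dt \;\ll\; T^{1+\varepsilon}\qquad (i=1,2,3,4).
\]

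Now Cauchy--Schwarz yields
\[
\int_{\mathbb R} V\!\left(\tfrac{t}{T}\right) S_i(t)\cdot O(T^{-1/2+\varepsilon})\,dt
\;\ll\; T^{-1/2+\varepsilon}\cdot T^{1/2}\cdot\Bigl(\int_{\mathbb R} V\!\left(\tfrac{t}{T}\right)|S_i|^2 dt\Bigr)^{1/2}\ll T^{1/2+\varepsilon},
\]
and summing all the cross contributions together with the $O(T^{\varepsilon})$ error-times-error term gives $R\ll T^{1/2+\varepsilon}$, as required.

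The only real subtlety is ensuring that the oscillatory factors hidden inside $S_2$ and $S_4$ (namely $(t/(2\pi e))^{\pm 2it}$ and the fractional powers of $t$) can be absorbed into unimodular weights and slowly-varying amplitudes, so that Lemma~\ref{7} applies uniformly in $w$; this is routine after restricting $t$ to $|t|\asymp T$ via the cut-off $V(t/T)$. With this point handled, the argument is otherwise entirely standard.
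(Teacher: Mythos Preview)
Your proposal is correct and follows essentially the same strategy as the paper: decompose $S_5$ into the cross terms $S_i\cdot O(T^{-1/2+\varepsilon})$ and the error-times-error term, then bound each cross term by Cauchy--Schwarz together with the mean-value estimate for Dirichlet polynomials (Lemma~\ref{7}) and the coefficient bounds from Lemma~\ref{3} (respectively the divisor bound). The paper packages this as $R\ll T^{\varepsilon}\sup_{u}\sup_{N,M}|R_1+R_2+R_3|$ and treats the unimodular factors in $S_2,S_4$ by remarking that the argument is ``similar'', whereas you spell this point out explicitly, but otherwise the two proofs coincide.
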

	\begin{proof}
		We can convert $R$ into the following form:
		$$
		\begin{aligned}
			R\ll T^{\varepsilon } \sup _{u \in\left[\varepsilon-i T^{\varepsilon}, \varepsilon+i T^{\varepsilon}\right]} \sup _{\substack{N \leq T^{1+\varepsilon} \\ M \leq T^{1+\varepsilon}}}|R_{1}+R_{2}+R_{3}|,
		\end{aligned}
		$$
		where 
		$$
		\begin{aligned}
			&R_{1}=T^{-1/2}\int_{\mathbb{R}}V\left(\frac{t}{T}\right)\left|\sum_{n\geq1}\frac{\lambda_{f}(n)}{n^{1/2+it+u}}V_1\left(\frac{n}{N}\right)t^{u}\right|\mathrm{~d} t,\\
			&R_{2}=T^{-1/2}\int_{\mathbb{R}}V\left(\frac{t}{T}\right)\left|\sum_{m\geq1}\frac{d(m)}{m^{1/2-it+u}}V_1\left(\frac{m}{M}\right)t^{u}\right|\mathrm{~d} t,\\ 
			&R_{3}=T^{-1}\int_{\mathbb{R}}V\left(\frac{t}{T}\right)\mathrm{~d} t.
		\end{aligned}
		$$
		For $R_{1}$, by Cauchy-Schwarz inequality, Lemma \ref{3} and Lemma \ref{7}, we have
		$$ 
		\begin{aligned}
	\begin{aligned}
		R_{1} & \ll T^{-1/2+\varepsilon}\left(\int_T^{2 T} 1^2 \mathrm{~d} t\right)^{1/2}\left(\int_T^{2 T}\left|\sum_{n \geq 1} V_1\left(\frac{n}{N}\right) \frac{\lambda_{f}(n)}{n^{1/2+it+u}}\right|^2 \mathrm{~d} t\right)^{1/2} \\
		& \ll T^{-1/2+\varepsilon} \cdot T^{1/2} \cdot(T+N)^{1/2}\\
		& \ll T^{1/2+\varepsilon} .
	\end{aligned}
		\end{aligned}
		$$ 
		The rest of the proof is similar to the proof of $R_1$ (the largest contribution arising from $R_1$ and $R_2$), hence it suffices to estimate $R_1$. Finally we can obtain that the upper bound of $R$ is $O(T^{1/2+\varepsilon})$.  
	\end{proof}
	
	Theorem \ref{t1} follows upon combining the previous lemmas.
	
	\section{Proof of Corollaries}
	Note that from \eqref{Ivic}, we have
	$$
	\begin{aligned}
		\int_{T}^{T+T/\Delta}\left|\zeta\left(\frac{1}{2}-it\right)\right|^4\mathrm{~d} t\ll \frac{T(\log T)^4}{\Delta}+T^{2/3}(\log T)^8.
	\end{aligned}
    $$
    In particular, if $\Delta\leq T^{1/3}(\log T)^{-4}$, we have
    $$
    \begin{aligned}
    	\int_{T}^{T+T/\Delta}\left|\zeta\left(\frac{1}{2}-it\right)\right|^4\mathrm{~d} t\ll \frac{T(\log T)^4}{\Delta};
    \end{aligned}
    $$
    see also \cite{iwaniec1980fourier}. Similarly, under the assumption $\Delta\leq (T\log T)^{1/3}$ and using \eqref{good}, we have
    $$
    \begin{aligned}
		\int_{T}^{T+T/\Delta}\left|L\left(\frac{1}{2}+it, f\right)\right|^2\mathrm{~d} t\ll\frac{T\log T}{\Delta}.
	\end{aligned}
	$$
	Similar to \cite[section 6]{lin2021analytic}, we choose the smooth function $V$ to be supported on $[1,2]$ and $V(x)=1$ on $[1+1/\Delta,2-1/\Delta]$. Then Theorem \ref{t1} yields
	$$
	\begin{aligned}
		\int_{T}^{2T}L\left(\frac{1}{2}+it, f\right)&{\zeta\left(\frac{1}{2}-it\right)}^2\mathrm{~d} t=2T \frac{L(1, f)^2}{\zeta(2)}+O\left(\frac{T}{\Delta}\right)+O\left(T^{1/2+\varepsilon}\right)\\&+\int_{T}^{T+T/\Delta}\left(1-V\left(\frac{t}{T}\right)\right)L\left(\frac{1}{2}+it, f\right){\zeta\left(\frac{1}{2}-it\right)}^2\mathrm{~d} t\\&+\int_{2T-T/\Delta}^{2T}\left(1-V\left(\frac{t}{T}\right)\right)L\left(\frac{1}{2}+it, f\right){\zeta\left(\frac{1}{2}-it\right)}^2\mathrm{~d} t.
	\end{aligned}
	$$
	From Cauchy-Schwarz inequality, this implies that
	$$
	\begin{aligned}
		\int_{T}^{2T}L\left(\frac{1}{2}+it, f\right)&{\zeta\left(\frac{1}{2}-it\right)}^2\mathrm{~d} t \\&=2 T \frac{L(1, f)^2}{\zeta(2)}+O\left(T^{1/2+\varepsilon}\right)+O\left(\frac{T(\log T)^{5/2}}{\Delta}\right) .
	\end{aligned}
	$$
	Corollary \ref{c3} then follows by choosing $\Delta=T^{\varepsilon}$.

	\section*{Acknowledgements}The author thanks Prof. Jianya Liu for his encouragement. She is especially grateful to Prof. Yongxiao Lin for proposing the project and for his guidance throughout the work. She also thanks Prof. Bingrong Huang for helpful comments. This work was supported by the National Key R\&D Program of China (No. 2021YFA1000700).
	
	\bibliographystyle{abbrv}

\end{document}